\documentclass[11pt]{amsart}
\usepackage{amsmath,amsthm,amssymb,enumerate,bbm}
\usepackage{euscript}
\usepackage{color}
\usepackage{dsfont}
\usepackage[left=2cm,right=2cm,top=3.5cm,bottom=3.5cm]{geometry}
\usepackage{hyperref}
\usepackage[capitalise]{cleveref}
\usepackage{mathrsfs}
\usepackage{graphicx}
\usepackage{tkz-euclide}
\usepackage[utf8]{inputenc}

\allowdisplaybreaks

\hypersetup{linkcolor=blue, colorlinks=true, citecolor = red}
\usepackage{upref}
\usepackage{url}
\usepackage[bottom]{footmisc}

\numberwithin{equation}{section}

\usepackage[maxbibnames=99,giveninits=true]{biblatex}
\bibliography{References.bib}

\newtheorem{Theorem}{Theorem}[section]
\newtheorem{Lemma}[Theorem]{Lemma}

\theoremstyle{definition}
\newtheorem{Definition}[Theorem]{Definition}
\newtheorem{Remark}[Theorem]{Remark}

\let\div\relax
\DeclareMathOperator{\div}{div}

\DeclareMathOperator{\supp}{supp}

\newcommand{\Id}{\operatorname{Id}}
\newcommand{\dd}{\,\mathrm{d}}
\newcommand{\del}{\partial}
\newcommand{\eps}{\varepsilon}

\newcommand{\ms}{\mathcal{S}}
\newcommand{\R}{\mathbb{R}}

\newcommand{\ba}{\boldsymbol{a}}
\newcommand{\bu}{\boldsymbol{u}}
\newcommand{\bv}{\boldsymbol{v}}
\newcommand{\bU}{\boldsymbol{U}}
\newcommand{\bUi}{\boldsymbol{U}_\infty}
\newcommand{\bV}{\boldsymbol{V}}
\newcommand{\bw}{\boldsymbol{u}}

\newcommand{\bS}{\mathbb S}

\newcommand{\vc}{\boldsymbol}
\newcommand{\bomega}{\boldsymbol{\omega}}
\newcommand{\ess}{\mathrm{ess}}
\newcommand{\res}{\mathrm{res}}
\newcommand{\esseps}{{\mathrm{ess},\eps}}
\newcommand{\reseps}{{\mathrm{res},\eps}}

\def\hmath$#1${\texorpdfstring{{\rmfamily\textit{#1}}}{#1}}

\title[Viscous compressible flow around a rotating body]{Weak-strong uniqueness and low Mach number limit for a viscous compressible fluid around a rotating body}

\author{Thomas Eiter}
\address{
Freie Universit\"at Berlin, Department of Mathematics and Computer Science, Arnimallee 14, 14195 Berlin, Germany, \textit{and}
Weierstrass Institute for Applied Analysis and Stochastics,
Anton-Wilhelm-Amo-Str.~39, 10117 Berlin, Germany.}
\email{thomas.eiter@wias-berlin.de}

\author{\v S\'arka Ne\v casov\'a}
\address{Institute of Mathematics, Czech Academy of Sciences,
\v Zitn\'a 25, 115 67 Praha 1, Czech Republic.}
\email{matus@math.cas.cz}

\author{Florian Oschmann}
\address{Faculty of Mathematics and Physics of the Charles University,
Sokolovsk\'a 49, 186 00 Praha 8, Czech Republic.}
 \email{florian.oschmann@matfyz.cuni.cz}
 
\date{}

\begin{document}

\begin{abstract}
We study the flow of an isothermal compressible Newtonian fluid around 
a body that performs a (time-independent) rigid motion.
We derive a weak-strong uniqueness principle,
and show that in the low Mach number limit, the governing equation is well approximated by the Navier--Stokes equations for incompressible rotating flow.
Both results are based on the derivation of a relative energy inequality for 
weak solutions to this exterior-domain problem.
\end{abstract}

\subjclass{%
76N06, 
76N10, 
76U05, 
76D05 
}

\keywords{Navier--Stokes equations, compressible flow, exterior domain, rotating body, weak-strong uniqueness, low Mach number limit, relative energy}

\maketitle

\tableofcontents



\section{Introduction}

We consider a rigid body $\ms$ that rotates with a fixed angular velocity $\bomega\in\R^3$ in the three-dimensional space filled 
with a compressible viscous fluid.
In a frame attached to the body,
we denote the fluid domain by $\Omega = \R^3 \setminus \overline{\ms}$, 
which is the complement of the rigid body.
The fluid flow in $\Omega$ is then governed by the Navier--Stokes equations
\begin{align}\label{NSE}
\left\{
\begin{aligned}
    \del_t \rho + \div(\rho (\bu - \bomega \times x)) &= 0 &&\mbox{in } (0,T)\times \Omega,\\
\del_t (\rho\bu) + \div ( \rho (\bu - \bomega \times x) \otimes \bu ) + \rho \bomega \times \bu  &=  \rho \vc f + \div \bS(\nabla \bu) -  \nabla p(\rho) && \mbox{in }(0,T)\times \Omega,\\
\bu &=  \bomega \times x && \mbox{on } (0,T)\times \del \Omega,\\
\rho(\cdot,x) \to \rho_\infty, \ \bu(\cdot,x) &\to \vc a_\infty && \mbox{as } |x| \to \infty,\\
\rho(0,\cdot)=\rho_0, \ (\rho \bu)(0,\cdot) &= \rho_0\bu_0 && \mbox{in } \Omega.
\end{aligned}
\right.
\end{align}
Here, $T>0$ is a given time horizon,
and $\rho\colon(0,T)\times\Omega\to[0,\infty)$ and $\bu\colon(0,T)\times\Omega\to\R^3$ 
denote the unknown fluid density and velocity, respectively.
We further prescribe an external force $\vc f\colon(0,T)\times\Omega\to\R^3$, and the density $\rho_\infty>0$ and the velocity $\ba_\infty\in\R^3$ 
at spatial infinity, where we assume that $\ba_\infty$ and $\bomega$ are parallel.
The viscous stress tensor $\bS(\nabla\bu)$ satisfies Newton's rheological law
\begin{equation}
\label{eq:stress}
    \bS(\nabla\bu) = \mu \Big( \nabla\bu + \nabla \bu^T - \frac23 \div \bu \Id \Big) + \eta \div \bu \Id,
\end{equation}
where  $\mu>0$ and $\eta \geq 0$ denote shear and bulk viscosity coefficients, respectively.
The pressure $p(\rho)$ satisfies
\begin{equation}\label{assPress}
    p \in C^1([0,\infty)) \cap C^2((0,\infty)), \quad p(0)=0, \quad p'(\rho)>0 \ (\rho > 0), \quad \liminf_{\rho \to \infty} \frac{p'(\rho)}{\rho^{\gamma-1}}>0
\end{equation}
for some $\gamma>1$. 
While the existence of weak solutions requires $\gamma>\frac{3}{2}$, 
see also Theorem~\ref{thm:existence} below, 
we merely have to assume $\gamma>\frac{6}{5}$ for the results established in this article, hence including a larger part of the physically relevant regime where $\gamma \in (1, \frac53]$.

The first two lines of~\eqref{NSE} describe mass conservation and momentum balance, respectively.
We assume that fluid particles are attached to the body,
which is reflected by no-slip boundary conditions.
The system is complemented by conditions at spatial infinity
as well as initial conditions $\rho_0$ and $\rho_0\bu_0$ for the density and the momentum, respectively.
For more details on the physical motivation, the derivation of the model, 
and the transformation into the body frame,
we refer the reader to \cite{kravcmar2014motion}. 
See also~\cite{G2} for the case of an incompressible fluid. 
Additional to the rotation of $\ms$, one could prescribe a time-independent 
translational motion
with velocity $\boldsymbol\tau\in\R^3$ parallel to 
$\bomega$.
By a simple change of frames, 
this is equivalent to considering~\eqref{NSE} with $\ba_\infty$ replaced with $\ba_\infty-\boldsymbol\tau$.
Therefore, this case is included in the setting treated here.

Concerning the mathematical theory of
compressible fluids,
the fundamental results on existence of global-in-time weak solutions in three dimensions 
were obtained by P.-L.~Lions \cite{Lions2}
in the barotropic case with $p(\rho)= a\rho^\gamma$, $a>0$, $\gamma\geq\frac{9}{5}$,
and by Feireisl, Novotn\'y, Petzeltov\'a~\cite{FeNoPe},
who provided an extension to exponents $\gamma>\frac{3}{2}$. 
In particular, the latter result includes the physically relevant case
$\gamma=\frac{5}{3}$ of a monoatomic gas.
While these works consider the flow in a bounded domain,
the results have been generalized to a plethora of other configurations in the recent years.
Several results and further references 
can be found in the monographs~\cite{FN, KMNPWK2025comprmovingdom, NovotnyStraskraba2004introcompr},
where, among other cases, the flow around a body at rest,
the temperature-dependent case,
and the flow in moving domains are considered, respectively.
Concerning the present case of the flow around a rigid body with prescribed translational and rotational motion,
the existence of weak solutions to~\eqref{NSE} was derived in~\cite{kravcmar2014motion}.
An existence result for a system describing self-propelled motion in an unbounded three-dimensional domain can be found in~\cite{MN}.

The uniqueness of weak solutions to the compressible Navier--Stokes equations 
is only known under restrictive assumptions, 
see~\cite{Hoff2006unique} for instance,
but it is still an open question in the general case, even in a bounded domain.
However, there are multiple results on weak-strong uniqueness,
that is, the property that any weak solution coincides with the (hypothetical) strong solution
with the same initial data
as long as the latter exists. 
The first result in this respect is due to Germain~\cite{Germain2011wsuNS},
where the considered weak solutions satisfied a regularity assumption
that could not be expected to hold in general.
In~\cite{feireisl2012relative, FeireislNovotnySun2011suitweaksol} this condition was omitted,
and the weak-strong uniqueness property was instead 
established for the class of weak solutions
constructed in~\cite{FeNoPe, Lions2}, 
which satisfy an energy inequality,
so-called \textit{finite-energy weak solutions}.
Since then, this result has been generalized in various ways, see e.g.~\cite{FeNo1, JessleJinNovotny2013} for the temperature-dependent case, or \cite{KremlnecPia, NeRa} for the case with moving rigid body in a bounded domain filled by a compressible barotropic fluid with homogeneous or inhomogeneous boundary data. For the case of a prescribed moving domain filled by a compressible fluid, the weak-strong uniqueness was shown in \cite{KMNPWK2025comprmovingdom}. 
One main result of the present article is to establish 
a weak-strong uniqueness principle 
for finite-energy weak solutions to~\eqref{NSE},
see Theorem~\ref{thm:wsu} below.

The weak-strong uniqueness results in all these works 
were derived from a \textit{relative energy inequality (REI)}.
In the present case, the relative energy is given by
\begin{equation}
\label{eq:relen}
    E(\rho, \bu | \sigma, \bV) = \frac12 \rho |\bu - \bV|^2 + P(\rho | \sigma)
\end{equation}
for suitable functions $(\rho,\bu)$ and $(\sigma, \bV)$,
where 
the absolute and relative internal energy are defined as
\begin{align}
    \label{eq:H.def}
    H(\rho) = \rho \int_1^\rho 
    \frac{p'(z)}{z^2} \dd z, &&
    P(\rho | \sigma) = H(\rho) - H'(\sigma)(\rho - \sigma) - H(\sigma),
\end{align}
respectively.
Due to $p'>0$, the function $H$ is strictly convex,
and we have $E(\rho, \bu | \sigma, \bV)=0$
if and only if $(\rho,\bu)=(\sigma,\bV)$.
In this way, the relative energy provides a natural distance measure.
We derive a REI that describes (an inequality for) 
the evolution of $E(\rho, \bu | \sigma, \bV)$
when $(\rho,\bu)$ is a solution to~\eqref{NSE}.
More precisely, we consider finite-energy weak solutions $(\rho,\bu)$ to~\eqref{NSE}
and sufficiently smooth comparison functions $(\sigma,\bV)$,
see Theorem~\ref{thm:rei} below.

Besides the weak-strong uniqueness principle,
we also use the REI to study 
the limit of low Mach number,
which means that the flow velocity is small compared to
the speed of sound,
which is the case in many real-world applications.
In this setting, the fluid is nearly incompressible,
and one would expect convergence towards 
the Navier--Stokes equations for incompressible fluids, 
in our framework given by
\begin{equation}\label{NSE-LM-Limit}
\left\{\begin{aligned}
\div \bU &= 0 && \mbox{in } (0,T)\times \Omega,\\
\rho_\infty \big( \del_t \bU + \div ( (\bU - \bomega \times x) \otimes \bU ) + \bomega \times \bU \big) &=  \rho_\infty \vc f  + \div \bS(\nabla \bU) - \nabla \Pi 
&& \mbox{in } (0,T)\times \Omega,\\
\bU &=  \bomega \times x && \mbox{on } (0,T)\times \del \Omega,\\
\bU(\cdot,x) &\to \vc a_\infty && \mbox{as } |x| \to \infty,\\
\bU(0,\cdot) &= \bU_0 && \mbox{in } \Omega,
\end{aligned}\right.
\end{equation}
where $\bU\colon(0,T)\times\Omega\to\R^3$ and $\Pi\colon(0,T)\times\Omega\to\R$ 
are the velocity and pressure field of the incompressible fluid flow, respectively.
A brief sketch of the proof of existence,
further properties like asymptotic behavior, and an overview of the state-of-the-art theory on system~\eqref{NSE-LM-Limit}
can be found in the recent monograph~\cite{NKNP}.

The mathematical study of the low Mach number limit for systems of equations describing a motion of fluids goes back to the seminal work of Klainerman and Majda \cite{KM1}. In general, studying various types of singular limits allows to eliminate unimportant or unwanted features of the motion as a consequence of scaling and asymptotic analysis. 
The mathematical analysis of singular limits in the frame of both strong and weak solutions was carried out in \cite{Gallag, SCH2} and \cite{DGLM,FN}, respectively. 
We show that in the low Mach number limit,
solutions $(\rho,\bu)$ to~\eqref{NSE}
can be approximated by $(\rho_\infty,\bU)$, 
where $(\bU,\Pi)$ solves~\eqref{NSE-LM-Limit},
see Theorem~\ref{thm:singLim} below.
The analogous question in the case of pure rotation
and with a gravitational force, partial slip boundary conditions, and additional low Froude number limit (strong stratification)
was studied in~\cite{Feireisl2012lowMachrotating} using compactness arguments.
In contrast to~\cite{Feireisl2012lowMachrotating},
we only consider so-called well-prepared initial data here,
which prevents the occurrence of acoustic waves, but we allow the fluid not to be at rest when $|x| \to \infty$.\\

\paragraph{\textbf{Outline of the paper}}
In Section~\ref{sec:weaksol} we introduce the 
notion of weak solutions
and we recall the current existence results.
We then derive a REI for these weak solutions in
Section~\ref{sec:relen}.
This result is then applied to prove a weak-strong uniqueness principle
in Section~\ref{sec:wsu},
and to study the limit of vanishing Mach number in Section~\ref{sec:lowMach}.\\

\paragraph{\textbf{General assumptions}}
Throughout the whole article, we make the following assumptions:
Let $\ms\subset\R^3$ be a bounded domain 
and $\Omega = \R^3 \setminus \overline{\ms}$ be an exterior domain
with Lipschitz boundary $\del\Omega=\del\ms$.
Without loss of generality, we assume $0 \in {\rm int} \, \ms$ and $\overline\ms \subset B_{1/2}(0)$.
We assume that $\bomega,\ba_\infty \in\R^3$ satisfy 
$\bomega \times \vc a_\infty = 0$, and that $\rho_\infty>0$.
The viscous stress and the pressure are given by~\eqref{eq:stress} and~\eqref{assPress}
with viscosity coefficients $\mu>0$ and $\eta\geq 0$,
and with $\gamma>\frac{6}{5}$. 
We consider an external force that satisfies 
$\vc f \in L^\infty(0,T; [L^1 \cap L^\infty] (\Omega))$.
For the initital data 
$(\rho_0, \bu_0)\in L^1_{\mathrm{loc}}(\Omega)\times L^1_{\mathrm{loc}}(\Omega;\R^3)$ 
we assume that
\[
\rho_0 \geq 0,\qquad E(\rho_0, \bu_0 | \rho_\infty, \ba_\infty) \in L^1(\Omega),
\]
that is, finiteness of the initial (relative) energy.

\section{Weak solutions}
\label{sec:weaksol}

We introduce the class of weak solutions to~\eqref{NSE} 
satisfying an energy inequality. 
Since we study the problem in an unbounded domain,
the formulation of the latter requires the consideration of a reference velocity $\bU_\infty:\R^3\to\R^3$ satisfying
\begin{equation}\label{Uinfty}
\bU_\infty\in C^\infty(\R^3;\R^3),
\qquad 
\bU_\infty(x)= \bomega\times x \quad \text{for }x\in\ms,
\qquad
\bU_\infty(x)= \ba_\infty \quad\text{for }x\in\R^3\setminus B_1.
\end{equation}
Such a vector field can be constructed by standard methods. Observe that $\div \bU_\infty = 0$ in $\R^3 \setminus (B_1 \setminus \ms)$.
Moreover, we have 
$E(\rho, \bu | \rho_\infty, \ba_\infty) \in L^1(\Omega)$
if and only if
$E(\rho, \bu | \rho_\infty, \bU_\infty) \in L^1(\Omega)$.

\begin{Definition}\label{def:wksol}
We call a couple $(\rho, \bu)$ a \emph{finite-energy weak solution} to system \eqref{NSE} with initial data $(\rho_0, \bu_0)$ if it satisfies:
\begin{itemize}
\item the following regularity assumptions:
\begin{equation}
\label{eq:reg.weak}
\begin{gathered}
    \rho \geq 0, \quad \rho - \rho_\infty \in L^\infty(0,T; [L^\gamma + L^2] (\Omega)), \\
    \rho |\bu - \vc a_\infty|^2 \in L^\infty(0,T; L^1(\Omega)),\\
    \bu - \vc a_\infty \in L^2(0,T; W^{1,2}(\Omega)), \\
    \bu(t) |_{\del \Omega} = \bomega \times x \text{ for a.e. } t \in (0,T).
\end{gathered}
\end{equation}

\item the weak form of the continuity equation: we have $\rho \in C_{weak}([0,T]; L^\gamma(K))$ for any compact $K \subset \overline{\Omega}$, and
\begin{equation}
\label{eq:cont.weak}
\int_0^T \int_\Omega \rho  \del_t \phi + \rho (\bu - \bomega \times x) \cdot \nabla \phi \dd x \dd t = - \int_\Omega \rho_0 \,\phi(0, \cdot) \dd x
\end{equation}
for any $\phi \in C_c^1([0,T) \times \overline{\Omega})$.
\item the weak form of the momentum equation: we have $\rho \bu \in C_{weak}([0,T]; L^\frac{2\gamma}{\gamma+1}(K))$ for any compact $K \subset \overline{\Omega}$, and
\begin{equation}
\label{eq:mom.weak}
\begin{aligned}
\int_0^T \int_\Omega \rho \bu \cdot \del_t \psi 
&+ \rho ((\bu - \bomega \times x) \otimes \bu) : \nabla \psi - \rho (\bomega \times \bu) \cdot \psi\dd x \dd t 
\\
& + \int_0^T \int_\Omega p(\rho) \div \psi - \bS(\nabla \bu):\nabla \psi - \rho \vc f \cdot \psi \dd x \dd t 
= - \int_\Omega \rho_0 \bu_0 \cdot \psi(0, \cdot) \dd x
\end{aligned}
\end{equation}
for any $\psi \in C_c^1([0,T) \times \Omega; \R^3)$.
\item 
the energy inequality:
there exists a reference velocity $\bU_\infty:\R^3\to\R^3$ with~\eqref{Uinfty}
such that for a.a.~$\tau\in[0,T]$ it holds
\begin{equation}\label{enIneq}
    \begin{aligned}
    &\left[ \int_\Omega \frac12 \rho |\bu - \bU_\infty|^2 + P(\rho | \rho_\infty) \dd x \right]_{t=0}^{t=\tau} + \int_0^\tau \int_\Omega \bS(\nabla \bu) : \nabla (\bu - \bU_\infty) \dd x \dd t \\
    &\leq - \int_0^\tau \int_\Omega p(\rho) \div \bU_\infty \dd x \dd t - \int_0^\tau \int_\Omega \rho (\bu - \bomega \times x) \cdot \nabla \bU_\infty \cdot (\bu - \bU_\infty) \dd x \dd t \\
    &\qquad - \int_0^\tau \int_\Omega \rho (\bomega \times \bU_\infty) \cdot (\bu - \bU_\infty) \dd x \dd t + \int_0^\tau \int_\Omega \rho \vc f \cdot (\bu - \bU_\infty) \dd x \dd t.
    \end{aligned}
\end{equation}
\end{itemize}
\end{Definition}

\begin{Remark}
At first glace,
the energy inequality~\eqref{enIneq} only yields a bound on 
$\nabla\bu=\nabla(\bu-\ba_\infty)$ in $L^2((0,T)\times\Omega)$,
but not on the function $\bu-\ba_\infty$ itself.
However, using the finiteness of the total energy
and the Poincar\'e inequality from~\cite[Lemma 3.1]{JessleJinNovotny2013},
one can conclude $\bu - \vc a_\infty \in L^2(0,T; W^{1,2}(\Omega))$.
\end{Remark}

We have the following existence result:

\begin{Theorem}
\label{thm:existence}
    Let $\gamma>\frac{3}{2}$.
    Then there exists a finite-energy weak solution to system \eqref{NSE} in the sense of Definition~\ref{def:wksol}.
\end{Theorem}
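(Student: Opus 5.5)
The plan is to reduce the statement to the existence theory of~\cite{kravcmar2014motion}. That paper constructs weak solutions to the flow around a body with prescribed translational and rotational velocity, and we adapt the construction to the present setting with $\ba_\infty\neq 0$ parallel to $\bomega$. The construction follows the classical scheme of Lions and Feireisl--Novotn\'y--Petzeltov\'a~\cite{FeNoPe} with an additional approximation of the exterior domain by bounded domains:
\begin{enumerate}[(i)]
\item Work on $\Omega_R=\Omega\cap B_R$ for $R$ large, where the far-field conditions are replaced by boundary data. On $\del B_R$ we impose $\bu=\ba_\infty$, which agrees with $\bU_\infty$ there, and on $\del\Omega$ we impose $\bu=\bomega\times x$. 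We shift the unknown to $\bw=\bu-\bU_\infty$, so the problem has homogeneous Dirichlet conditions.
\item Use the standard three-level approximation: a Faedo--Galerkin scheme for the momentum equation, parabolic regularization $\eps\Delta\rho$ of the continuity equation, and artificial pressure $\delta\rho^\beta$ with $\beta$ large. The transport field $\bu-\bomega\times x$ is unbounded only at infinity, so on $\Omega_R$ it is smooth and bounded and the usual fixed-point argument applies. The Coriolis term $\rho\bomega\times\bu$ is orthogonal to $\bu$ and drops out of the energy. This yields approximate solutions satisfying a version of~\eqref{enIneq} with the extra terms $\eps,\delta$.
\end{enumerate}

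For the uniform bounds, the energy inequality with reference field $\bU_\infty$ gives bounds, independent of $\eps,\delta,R$, on $\rho|\bw|^2$, $P(\rho|\rho_\infty)$ in $L^\infty L^1$, and $\nabla\bu$ in $L^2L^2$. Two terms on the right-hand side need care.
\begin{itemize}
\item The term $\rho(\bu-\bomega\times x)\cdot\nabla\bU_\infty\cdot(\bu-\bU_\infty)$ is supported in $B_1$ because $\nabla\bU_\infty$ vanishes outside, so it is controlled by the energy.
\item The term $\rho(\bomega\times\bU_\infty)\cdot(\bu-\bU_\infty)$ requires $\bomega\times\ba_\infty=0$: outside $B_1$ it vanishes, which is exactly why the parallelism assumption appears.
\end{itemize}
A Gronwall argument then closes the bounds. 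The pressure term $p(\rho)\div\bU_\infty$ is supported in $B_1\setminus\ms$ and is bounded via $P(\rho|\rho_\infty)$.

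The limits $\eps\to0$, $\delta\to0$ and $R\to\infty$ are then taken as in~\cite{FeNoPe,FN}. This requires local improved pressure integrability via the Bogovski\u{\i} operator on compact subsets, the effective viscous flux identity, and renormalized solutions and oscillation defect measures to get strong convergence of $\rho$. The restriction $\gamma>\frac32$ enters exactly here. The rotational term $\bomega\times x$ is linear and locally bounded, so it does not affect these local compactness arguments. The energy inequality passes to the limit by weak lower semicontinuity.

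The main obstacle I expect is the limit $R\to\infty$ together with the unbounded drift $\bomega\times x$. Because $\div(\bomega\times x)=0$, the transport term is harmless in the energy estimate. I would follow~\cite{kravcmar2014motion} and localize with cutoffs, using only local-in-space compactness and the uniform global energy bounds to identify limits in~\eqref{eq:cont.weak} and~\eqref{eq:mom.weak} for compactly supported test functions. With this, the theorem is essentially~\cite[Theorem]{kravcmar2014motion}, extended from $\ba_\infty=0$ to nonzero $\ba_\infty$ with $\bomega\times\ba_\infty=0$ through the choice of $\bU_\infty$.
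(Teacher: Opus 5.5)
Your conclusion, that the theorem is the existence result of \cite{kravcmar2014motion}, is also how the paper argues. However, the extension you add on top of that citation is aimed at the wrong ingredient, and your own construction for it has a step that fails.

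\textbf{The force term.} According to the paper, \cite{kravcmar2014motion} already constructs weak solutions to \eqref{NSE} itself, i.e.\ for a body with prescribed rotation and translation parallel to $\bomega$ (equivalently $\ba_\infty\parallel\bomega$). This includes the energy inequality \eqref{enIneq} (cf.\ \cite[Remark~6.1]{kravcmar2014motion}). What it does not cover is a nonzero force, and that is the only extension the paper makes. Your proposal never mentions $\vc f$. You need to check that the term $\int_0^\tau\int_\Omega\rho\vc f\cdot(\bu-\bUi)\dd x\dd t$ closes in the Gr\"onwall argument. One way is $\rho|\vc f||\bu-\bUi|\le\frac12\rho|\vc f|^2+\frac12\rho|\bu-\bUi|^2$ together with $\rho|\vc f|^2\le|\rho-\rho_\infty||\vc f|^2+\rho_\infty|\vc f|^2$. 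Here the essential and residual parts of $\rho-\rho_\infty$ are controlled in $L^2$ and $L^\gamma$ by $\int_\Omega P(\rho|\rho_\infty)\dd x$, and $|\vc f|^2\in L^\infty(0,T;[L^1\cap L^\infty](\Omega))$; this is exactly where the assumption on $\vc f$ is used. Passing to the limit in $\rho\vc f$ in \eqref{eq:mom.weak} is then harmless.

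\textbf{Step (i) fails for $\ba_\infty\neq0$.} On $\Omega_R$ with $\bu=\ba_\infty$ on $\partial B_R$, the transport field of the continuity equation satisfies $(\bu-\bomega\times x)\cdot n=\ba_\infty\cdot x/R$ on $\partial B_R$, because $\bomega\times x$ is tangent to spheres. So for $\ba_\infty\neq0$ the outer boundary is permeable, with inflow on $\{\ba_\infty\cdot x<0\}$. The scheme of \cite{FeNoPe} that you invoke relies on an impermeable boundary at every level:
\begin{itemize}
\item Neumann conditions for the $\eps\Delta\rho$ regularization;
\item conservation of mass and an energy balance without boundary fluxes;
\item the renormalized continuity equation obtained after extending $\rho$ by zero.
\end{itemize}
With inflow you must prescribe $\rho$ (say $\rho=\rho_\infty$) on the inflow part and keep track of boundary flux terms. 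You must also redo the compactness argument in the inflow/outflow setting of open fluid systems, which is a substantially harder problem. Neither the boundedness of $\bomega\times x$ on $\Omega_R$ nor the choice of $\bUi$ addresses this. For $\ba_\infty=0$ the difficulty disappears precisely because $\bomega\times x$ is then tangential to $\partial B_R$.

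Hence your claim that the result extends from $\ba_\infty=0$ to $\ba_\infty\parallel\bomega$ ``through the choice of $\bUi$'' is unjustified. That choice only handles the energy estimate, not the construction of approximate solutions. You have two options. Either supply an inflow/outflow analysis, or a truncation that avoids inflow. Or do as the paper does and rely on \cite{kravcmar2014motion} for $\ba_\infty\parallel\bomega$; then your steps (i)--(ii) are unnecessary and only the force needs justification.
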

\begin{proof}
    Weak solutions to~\eqref{NSE} were constructed in~\cite{kravcmar2014motion} for the case $\vc f=0$. However, it is evident that the same proof applies in the case $\vc f \in L^\infty(0,T; [L^1 \cap L^\infty] (\Omega))$.
    For the validity of the energy inequality~\eqref{enIneq}, see also~\cite[Remark~6.1]{kravcmar2014motion}.
\end{proof}

As mentioned above, 
although we cannot ensure the existence of solutions if $\gamma\leq \frac{3}{2}$,
all arguments in the subsequent sections will only make use of 
the weaker assumption $\gamma>\frac{6}{5}$.

\section{Relative energy inequality}
\label{sec:relen}

In this section, we derive a REI for weak solutions in the sense of Defintion~\ref{def:wksol}.
Recall the definition of the relative energy
$E(\rho, \bu | \sigma, \bV)$ from~\eqref{eq:relen}.

\begin{Theorem}
    \label{thm:rei}
    Let $(\rho,\bu)$ be a finite-energy weak solution to~\eqref{NSE} in the sense of Definition~\ref{def:wksol}.
    Let $0 < \underline{\sigma} \leq \rho_\infty \leq \overline{\sigma} < \infty$, 
    and let $(\sigma,\bV)\in C^1((0,T)\times\Omega)$ be in the following regularity class:
    \begin{equation}
    \label{regStr}
    \begin{gathered}
    \sigma - \rho_\infty \in L^\infty(0,T; [L^2 \cap L^{\frac{\gamma}{\gamma-1}}](\Omega)), \qquad 
    \underline{\sigma} \leq \sigma \leq \overline{\sigma},
    \\
    \nabla \sigma \in L^1(0,T; [L^2 \cap L^4 \cap L^\frac{2\gamma}{\gamma-1}] (\Omega)), \qquad 
    \del_t \sigma \in L^1(0,T; [L^2 \cap L^{\frac{\gamma}{\gamma-1}}] (\Omega)),
    \\
    (\ba_\infty-\bomega\times x)\cdot\nabla \sigma \in L^1(0,T; [L^1 \cap L^2 \cap L^{\frac{\gamma}{\gamma-1}}] (\Omega)),
    \\
    \bV - \vc a_\infty \in L^\infty(0,T; [L^2 \cap L^4 \cap L^{\frac{2\gamma}{\gamma-1}}] (\Omega)), 
    \\
    \nabla \bV \in L^1(0,T; [L^\frac{\gamma}{\gamma-1} \cap L^\infty] (\Omega)) \cap L^2((0,T)\times \Omega), 
    \\
    (\ba_\infty-\bomega\times x)\cdot\nabla\bV\in L^1(0,T; [L^2 \cap L^4 \cap L^{\frac{2\gamma}{\gamma-1}}] (\Omega)),
    \\
    \del_t \bV \in L^1(0,T; [L^2 \cap L^4 \cap L^{\frac{2\gamma}{\gamma-1}}] (\Omega)).
    \end{gathered}
    \end{equation}
Further assume that 
$\bV = \bomega \times x$ on $(0,T)\times\partial\Omega$.
Then 
\begin{equation}\label{REI}
\begin{aligned}
     &\left[ \int_\Omega E(\rho, \bu | \sigma, \bV)  \dd x \right]_{t=0}^{t=\tau} + \int_0^\tau \int_\Omega \bS(\nabla (\bu - \bV)) : \nabla (\bu - \bV) \dd x \dd t \\
    &\leq 
    \int_0^\tau \int_\Omega \rho (\bV - \bu) \cdot ( \del_t \bV  + \bomega\times\bV  + ((\bu-\bomega\times x) \cdot \nabla) \bV  - \vc f ) \dd x \dd t \\
    &\qquad - \int_0^\tau \int_\Omega \bS(\nabla \bV) : \nabla (\bu - \bV) \dd x \dd t  - \int_0^\tau \int_\Omega ( p(\rho) - p(\sigma)) \div \bV \dd x \dd t \\
    &\qquad - \int_0^\tau \int_\Omega (\rho - \sigma) \del_t H'(\sigma) + (\rho (\bu-\bomega\times x) - \sigma (\bV - \bomega \times x)) \cdot \nabla H'(\sigma) \dd x \dd t
\end{aligned}
\end{equation}
for a.a.~$\tau\in(0,T)$.
\end{Theorem}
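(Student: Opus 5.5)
The plan is the standard one of \cite{FeireislNovotnySun2011suitweaksol, feireisl2012relative}: expand the relative energy as
\[
\int_\Omega E(\rho,\bu|\sigma,\bV)\dd x = \int_\Omega \tfrac12\rho|\bu-\bU_\infty|^2 + P(\rho|\rho_\infty)\dd x + \int_\Omega \rho(\bu-\bU_\infty)\cdot(\bU_\infty-\bV) + \tfrac12\rho|\bU_\infty-\bV|^2 \dd x + \int_\Omega \big(P(\rho|\sigma)-P(\rho|\rho_\infty)\big)\dd x ,
\]
and then treat each group separately. The first term is controlled by the energy inequality \eqref{enIneq}. The remaining two are expressed through the weak formulations. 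Writing $\bu-\bU_\infty=\bu-\bV+(\bV-\bU_\infty)$, the middle group equals $-\int\rho\bu\cdot(\bV-\bU_\infty) + \tfrac12\int\rho|\bV|^2-\tfrac12\int\rho|\bU_\infty|^2$ up to rearrangement. We compute its evolution as follows:
\begin{itemize}
\item test the momentum equation \eqref{eq:mom.weak} with $\psi=\bV-\bU_\infty$;
\item test the continuity equation \eqref{eq:cont.weak} with $\phi=\tfrac12|\bV|^2-\tfrac12|\bU_\infty|^2$.
\end{itemize}
The last group, $P(\rho|\sigma)-P(\rho|\rho_\infty) = -H'(\sigma)\rho + \big(H'(\sigma)\sigma-H(\sigma)\big) + \mathrm{const}(\rho_\infty,\rho)$, is linear in $\rho$. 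Since $H'(\sigma)\sigma-H(\sigma)=p(\sigma)$, we test \eqref{eq:cont.weak} with $\phi=H'(\sigma)-H'(\rho_\infty)$ and integrate $\partial_t p(\sigma)$ directly.

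These test functions are not admissible as they stand. They do not have compact support, $\bV-\bU_\infty$ vanishes on $\partial\Omega$ but only lies in the integrability class \eqref{regStr}, and the data are only $C^1$ in time on the open interval. We therefore first multiply by a spatial cutoff $\chi_R(x)=\chi(x/R)$ and by time cutoffs approximating $\mathds 1_{[0,\tau]}$, where $\tau$ is a Lebesgue point. For the momentum test near the boundary, we additionally approximate $\bV-\bU_\infty$ by $C_c^1$ fields in $W^{1,2}_0$-type norms; this is legitimate because $\bu-\ba_\infty\in L^2W^{1,2}$ and the traces match.

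\textbf{Main obstacle.} The hardest step is the limit $R\to\infty$, and this is where the exponent bookkeeping in \eqref{regStr} enters. The error terms contain $\nabla\chi_R$ multiplied by quantities such as $\rho(\bu-\bomega\times x)\cdot(\ldots)$. The factor $|x|$ from $\bomega\times x$ grows, but $|\nabla\chi_R|\lesssim R^{-1}$ on $R\le|x|\le2R$ compensates it. This is exactly why \eqref{regStr} assumes the combinations $(\ba_\infty-\bomega\times x)\cdot\nabla\sigma$ and $(\ba_\infty-\bomega\times x)\cdot\nabla\bV$ rather than $\nabla\sigma$, $\nabla\bV$ alone. For the convergence of the remaining terms we use dominated convergence after splitting $\rho$ into an essential part ($\rho\in[\rho_\infty/2,2\rho_\infty]$) and a residual part. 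From $P(\rho|\rho_\infty)\in L^\infty L^1$ we get $\rho-\rho_\infty\in L^2+L^\gamma$, and $\bu-\ba_\infty\in L^2L^6$, and the exponents $L^{\gamma/(\gamma-1)}$, $L^{2\gamma/(\gamma-1)}$, $L^4$ in \eqref{regStr} are chosen so that each product lies in $L^1$. The condition $\gamma>\frac65$ enters through the estimate of $\rho|\bu-\ba_\infty|$ on the residual set, where we write $\sqrt\rho\,\sqrt\rho|\bu-\ba_\infty|$ or use $\rho\in L^\gamma$ together with $\bu\in L^6$, which requires $\frac1\gamma+\frac16<1$.

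Once all identities are established, we sum them with \eqref{enIneq}. The $\bU_\infty$-terms cancel exactly; this requires $\bomega\times\bU_\infty=\bomega\times\ba_\infty=0$ outside $B_1$ and the boundary values of $\bU_\infty$. After the algebraic identities
\begin{itemize}
\item $\bS(\nabla\bu):\nabla(\bu-\bU_\infty)-\bS(\nabla\bu):\nabla(\bV-\bU_\infty)=\bS(\nabla(\bu-\bV)):\nabla(\bu-\bV)+\bS(\nabla\bV):\nabla(\bu-\bV)$,
\item $\rho\nabla H'(\rho)=\nabla p(\rho)$ in the pressure terms, giving $-(p(\rho)-p(\sigma))\div\bV$ once the $\sigma\bV\cdot\nabla H'(\sigma)=\bV\cdot\nabla p(\sigma)$ term is integrated by parts,
\end{itemize}
we arrive at \eqref{REI} for a.a.\ $\tau$. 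The Coriolis term $\rho\bomega\times\bu\cdot(\bV-\bu)$ combines into $\rho(\bV-\bu)\cdot\bomega\times\bV$ because $(\bomega\times(\bu-\bV))\cdot(\bu-\bV)=0$.
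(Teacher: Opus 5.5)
\textbf{Gap: the order in which you remove the cutoffs.} Your algebra is right. The single test function $\tfrac12|\bV|^2-\tfrac12|\bU_\infty|^2$ is exactly the sum of the paper's two tests $\tfrac12|\bV-\bU_\infty|^2$ and $\bU_\infty\cdot(\bV-\bU_\infty)$, and the $\bU_\infty$-, Coriolis and convective terms combine as you say. The problem is that you remove $\chi_R$ in each tested equation separately, relying on the exponents in \eqref{regStr}, and only then sum with \eqref{enIneq}.

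\textbf{Why the separate identities fail.} \eqref{regStr} does not place $\sigma-\rho_\infty$, $\del_t\sigma$, $\bV-\ba_\infty$ or $\del_t\bV$ in $L^1(\Omega)$, while $\rho\to\rho_\infty>0$ and $\bu\to\ba_\infty$ at infinity. So several integrals in the separate identities are not defined in general:
\begin{itemize}
\item $\int_\Omega(p(\sigma)-p(\rho_\infty))\dd x$ and $\int_\Omega\rho\,(H'(\sigma)-H'(\rho_\infty))\dd x$ have integrands of order $p'(\rho_\infty)(\sigma-\rho_\infty)$ at infinity;
\item $\int_0^\tau\!\int_\Omega\rho\,\del_t H'(\sigma)\dd x\dd t$ contains $\rho_\infty H''(\sigma)\del_t\sigma$;
\item for $\ba_\infty\neq0$, $\int_\Omega\rho\bu\cdot(\bV-\bU_\infty)\dd x$ and $\int_0^\tau\!\int_\Omega\rho\bu\cdot\del_t\bV\dd x\dd t$ contain $\rho_\infty\ba_\infty\cdot(\bV-\ba_\infty)$ and $\rho_\infty\ba_\infty\cdot\del_t\bV$.
\end{itemize}
Hence these identities cannot be obtained one at a time by letting $R\to\infty$ with the integrability available. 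The exponent bookkeeping in \eqref{regStr} only works for the combined integrands of \eqref{REI}: $(\rho-\sigma)\del_t H'(\sigma)$, $\rho(\bV-\bu)\cdot\del_t\bV$, $\tfrac12\rho|\bu-\bV|^2-\tfrac12\rho|\bu-\bU_\infty|^2$, and $P(\rho|\sigma)-P(\rho|\rho_\infty)$. In the last one the parts linear in $\sigma-\rho_\infty$ cancel because $\rho_\infty H''(\rho_\infty)=p'(\rho_\infty)$.

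For the same reason, integrating $(\bV-\bomega\times x)\cdot\nabla p(\sigma)$ by parts is not justified in isolation. It leaves a flux of $p(\sigma)(\bV-\bomega\times x)$ at infinity that you do not control. Relatedly, the $R^{-1}$ in $\nabla\chi_R$ is not the mechanism at work. With radial cutoffs $(\bomega\times x)\cdot\nabla\chi_R\equiv0$, so there is no growth to compensate. The combinations $(\ba_\infty-\bomega\times x)\cdot\nabla\sigma$ and $(\ba_\infty-\bomega\times x)\cdot\nabla\bV$ in \eqref{regStr} are needed for main terms such as $\rho_\infty(\ba_\infty-\bomega\times x)\cdot\nabla H'(\sigma)$, not for cutoff errors.

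\textbf{What the paper does.} It reverses the order. Lemma~\ref{lem:rei.compact} first proves the complete inequality \eqref{REI} for comparison functions equal to $(\rho_\infty,\ba_\infty)$ outside a ball $B_R$. There all your test functions have compact support and every integral is finite. The pressure identity
$\int_\Omega p(\sigma)\div\bV+\sigma(\bV-\bomega\times x)\cdot\nabla H'(\sigma)\dd x=0$
follows from Gauss' theorem applied to $p(\sigma)(\bV-\bU^R_\infty)$, where $\bU^R_\infty$ is solenoidal, equals $\bomega\times x$ on $B_R$ and equals $\ba_\infty$ outside $B_{2R}$. The general case then follows by applying the lemma to $\sigma_R=\phi_R\sigma+(1-\phi_R)\rho_\infty$ and $\bV_R=\phi_R\bV+(1-\phi_R)\ba_\infty$, and letting $R\to\infty$ in the already combined inequality.

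\textbf{How to repair your variant.} You can keep cut-off test functions if you sum everything at fixed $R$:
\begin{itemize}
\item add all tested identities to \eqref{enIneq} before taking any limit;
\item discard the uncancelled energy-inequality terms on $\{\chi_R\neq1\}$, using $\bomega\times\ba_\infty=0$ and $\supp\nabla\bU_\infty\subset\overline{B_1}$;
\item show that the \emph{combined} $\nabla\chi_R$-errors vanish, using for instance $p(s)-p(\rho_\infty)-\rho_\infty(H'(s)-H'(\rho_\infty))=O(|s-\rho_\infty|^2)$ on $[\underline\sigma,\overline\sigma]$.
\end{itemize}
Only after that should you let $R\to\infty$.
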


\begin{Remark}
To see where the assumptions \eqref{regStr} come from, let us rewrite the REI \eqref{REI} in the following form, where we also used that $\bomega \times \vc a_\infty = 0$:
\begin{align*}
    &\left[ \int_\Omega E(\rho, \bu | \sigma, \bV)  \dd x \right]_{t=0}^{t=\tau} + \int_0^\tau \int_\Omega \bS(\nabla (\bu - \bV)) : \nabla (\bu - \bV) \dd x \dd t \\
    &\leq \int_0^\tau \int_\Omega \rho (\bV - \vc a_\infty) \cdot \del_t \bV \dd x \dd t + \int_0^\tau \int_\Omega \rho (\vc a_\infty - \bu) \cdot \del_t \bV \dd x \dd t \\
    &\quad + \int_0^\tau \int_\Omega \rho (\bV - \vc a_\infty) \cdot (  \bomega \times (\bV - \vc a_\infty)   ) \dd x \dd t + \int_0^\tau \int_\Omega \rho (\vc a_\infty - \bu) \cdot (  \bomega \times (\bV - \vc a_\infty)   ) \dd x \dd t \\
    &\quad + \int_0^\tau \int_\Omega (\bV - \vc a_\infty) \cdot (\rho(\bu - \vc a_\infty) \cdot \nabla) \bV \dd x \dd t + \int_0^\tau \int_\Omega \rho (\vc a_\infty - \bu) \cdot ((\bu - \vc a_\infty) \cdot \nabla) \bV \dd x \dd t \\
    &\quad + \int_0^\tau \int_\Omega \rho (\bV - \vc a_\infty) \cdot ((\vc a_\infty - \bomega\times x) \cdot \nabla) \bV \dd x \dd t + \int_0^\tau \int_\Omega \rho (\vc a_\infty - \bu) \cdot ((\vc a_\infty - \bomega\times x) \cdot \nabla) \bV \dd x \dd t \\
    &\quad - \int_0^\tau \int_\Omega \rho (\bV - \vc a_\infty) \cdot \vc f \dd x \dd t - \int_0^\tau \int_\Omega \rho (\vc a_\infty - \bu) \cdot \vc f \dd x \dd t \\
    &\quad - \int_0^\tau \int_\Omega \bS(\nabla \bV) : \nabla (\bu - \vc a_\infty) \dd x \dd t - \int_0^\tau \int_\Omega \bS(\nabla \bV) : \nabla (\vc a_\infty - \bV) \dd x \dd t \\
    &\quad - \int_0^\tau \int_\Omega ( p(\rho) - p'(\rho_\infty)(\rho - \rho_\infty) - p(\rho_\infty) ) \div \bV \dd x \dd t \\
    &\quad - \int_0^\tau \int_\Omega p'(\rho_\infty)[ (\rho - \rho_\infty) - (\sigma - \rho_\infty) ] \div \bV \dd x \dd t \\
    &\quad + \int_0^\tau \int_\Omega ( p(\sigma) - p'(\rho_\infty)(\sigma-\rho_\infty) - p(\rho_\infty) ) \div \bV \dd x \dd t \\
    &\quad - \int_0^\tau \int_\Omega (\rho - \rho_\infty) H''(\sigma) \del_t \sigma \dd x \dd t - \int_0^\tau \int_\Omega (\rho_\infty - \sigma) H''(\sigma) \del_t \sigma \dd x \dd t \\
    &\quad - \int_0^\tau \int_\Omega \rho (\bu- \vc a_\infty) \cdot H''(\sigma) \nabla \sigma \dd x \dd t \\
    &\quad - \int_0^\tau \int_\Omega (\rho - \rho_\infty) (\vc a_\infty-\bomega\times x) \cdot H''(\sigma) \nabla \sigma \dd x \dd t - \int_0^\tau \int_\Omega \rho_\infty (\vc a_\infty-\bomega\times x) \cdot H''(\sigma) \nabla \sigma \dd x \dd t \\
    &\quad + \int_0^\tau \int_\Omega \sigma (\bV - \vc a_\infty) \cdot H''(\sigma) \nabla \sigma \dd x \dd t + \int_0^\tau \int_\Omega \sigma (\vc a_\infty - \bomega \times x) \cdot H''(\sigma) \nabla \sigma \dd x \dd t .
\end{align*}
Using the regularity of $(\rho, \bu)$ mentioned in \eqref{eq:reg.weak}, which also gives us $\sqrt{\rho}\in L^\infty(L^{2\gamma}+L^4+L^\infty)$ and hence
\begin{align*}
    \rho (\bu - \vc a_\infty) = \sqrt{\rho}\sqrt{\rho} (\bu - \vc a_\infty) 
    \in L^\infty (L^{\frac{2\gamma}{\gamma+1}}+L^{4/3}+L^2),
\end{align*}
leads to the regularity from \eqref{regStr},
where we also use the general facts $(L^p \cap L^q)' = L^{p'} + L^{q'}$ and $L^p \cap L^q \subset L^p \subset L^p + L^q$.
\end{Remark}

In the case $\bomega = 0$, formula~\eqref{REI} corresponds to the usual REI  as can be found, for instance, in \cite{feireisl2012relative}. Note also that this inequality is independent of $\ba_\infty$ and thus of the choice of $\bU_\infty$. 

To prove Theorem~\ref{thm:rei}, we first consider the case that $(\sigma,\bV)$ differs from the limit values $(\rho_\infty,\ba_\infty)$ only on a compact set.

\begin{Lemma}
    \label{lem:rei.compact}
    In addition to the assumptions from Theorem~\ref{thm:rei}, assume that 
    that there exists $R>1$ such that 
    $\sigma(t,x)= \rho_\infty$ and $\bV(t,x) = \vc a_\infty$ for $t\in(0,T)$ and $|x|>R$.
    Then~\eqref{REI} holds for a.a.~$\tau\in(0,T)$.
\end{Lemma}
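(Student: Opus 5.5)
The plan is the standard one for relative energy inequalities. We expand $E(\rho,\bu|\sigma,\bV)$ as the energy relative to $(\rho_\infty,\bU_\infty)$ plus correction terms that are linear in $\rho$ and $\rho\bu$. The energy inequality~\eqref{enIneq} controls the first part, and the weak formulations~\eqref{eq:cont.weak}, \eqref{eq:mom.weak} control the second. Concretely, the pointwise identity
\begin{align*}
E(\rho,\bu|\sigma,\bV) ={}& \tfrac12\rho|\bu-\bU_\infty|^2 + P(\rho|\rho_\infty) + \rho\bu\cdot(\bU_\infty-\bV) + \tfrac12\rho\big(|\bV|^2-|\bU_\infty|^2\big)\\
&+ \rho\big(H'(\rho_\infty)-H'(\sigma)\big) + \big(\sigma H'(\sigma)-H(\sigma)\big) - \big(\rho_\infty H'(\rho_\infty)-H(\rho_\infty)\big)
\end{align*}
holds. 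The last two terms use $\rho H'(\rho)-H(\rho)=p(\rho)$ up to a normalising constant, i.e.\ they give $p(\sigma)-p(\rho_\infty)$. Under the assumption of the lemma, all correction terms vanish for $|x|>R$, using $\bV=\ba_\infty=\bU_\infty$ there. Hence each is integrable, and its time evolution can be computed by testing with compactly supported functions.

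The key steps, in order, are as follows.
\begin{enumerate}
\item We test the momentum equation~\eqref{eq:mom.weak} with $\psi=\bU_\infty-\bV$. This function is $C^1$, vanishes for $|x|>R$, and vanishes on $\partial\Omega$ because $\bV=\bomega\times x=\bU_\infty$ there. It is not compactly supported in $\Omega$, only in $\overline\Omega$, so we first approximate it by $\psi\chi_\eps$ with boundary cut-offs $\chi_\eps$. The resulting error involves $\nabla\chi_\eps$ times $\psi$, which is $O(\mathrm{dist})$ near a Lipschitz boundary; we control it by a Hardy-type inequality, noting that $\bu-\bU_\infty$ and $\psi$ lie in $W^{1,2}_0$ near the boundary. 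To obtain a pointwise-in-time statement we also multiply by a time cut-off approximating $\mathbb 1_{[0,\tau]}$, using weak continuity.
\item We test the continuity equation~\eqref{eq:cont.weak} with $\phi=\frac12(|\bV|^2-|\bU_\infty|^2)$ and with $\phi=H'(\rho_\infty)-H'(\sigma)$. Both are admissible since they lie in $C^1_c([0,T)\times\overline\Omega)$ after the time cut-off.
\item We add these identities to~\eqref{enIneq} and handle the $\sigma$-only term $p(\sigma)-p(\rho_\infty)$ by the fundamental theorem of calculus. Its time derivative is $\sigma\,\partial_t H'(\sigma)$. The spatial counterpart follows by integrating $\div(\sigma(\bV-\bomega\times x))H'(\sigma)$ by parts, using $\div(\bomega\times x)=0$ and the boundary condition $(\bV-\bomega\times x)\cdot n=0$.
\item We collect the terms. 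The $\bU_\infty$-dependent terms cancel; this uses $\bomega\times\bU_\infty$ against $\rho\bomega\times\bu$, the antisymmetry identity $(\bomega\times\bu)\cdot\bu=0$, and $\div\bU_\infty$ against the pressure term. For the viscous part we write $\bS(\nabla\bu):\nabla(\bu-\bU_\infty)+\bS(\nabla\bu):\nabla(\bU_\infty-\bV)=\bS(\nabla(\bu-\bV)):\nabla(\bu-\bV)+\bS(\nabla\bV):\nabla(\bu-\bV)$. For the pressure we use $\int p(\sigma)\div\bV$ together with $\sigma\nabla H'(\sigma)=\nabla p(\sigma)$.
\end{enumerate}
Assembling these gives exactly the right-hand side of~\eqref{REI}. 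The convective terms combine into $\rho(\bV-\bu)\cdot((\bu-\bomega\times x)\cdot\nabla)\bV$, again using $\rho(\bu-\bomega\times x)\otimes\bu:\nabla\bu$-type cancellations in the difference.

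The main obstacle is step 1, namely the admissibility of test functions that do not vanish near $\partial\Omega$ and are only $C^1$ on the open time interval. For the boundary issue we rely on the trace condition $\bu=\bomega\times x=\bV$ and a cut-off argument; alternatively, $\psi\in W^{1,2}_0$ can be approximated by $C_c^\infty(\Omega)$ functions in $W^{1,2}\cap L^\infty$ with uniform bounds and $L^\infty$-convergence of $\psi$ near $\partial\Omega$. For the time issue we mollify in time and use $\partial_t\bV,\partial_t\sigma\in L^1$ together with weak continuity of $\rho$ and $\rho\bu$. Integrability of every product on $B_R\cap\Omega$ follows from~\eqref{eq:reg.weak} and~\eqref{regStr} as in the preceding remark, since $\gamma>6/5$ gives $\rho\bu\in L^2(L^{6\gamma/(\gamma+6)})$ locally, and $\rho|\bu|^2$ times bounded $\nabla\bV$ is integrable.
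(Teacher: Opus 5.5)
Your proposal is correct and is essentially the paper's proof. Both combine the energy inequality with the momentum equation tested by $\bV-\bU_\infty$ (up to sign), test the continuity equation with a quadratic expression in $\bV,\bU_\infty$ and with $H'(\sigma)-H'(\rho_\infty)$, apply the fundamental theorem of calculus to the $\sigma$-only terms, and use an integration-by-parts identity for $\int_\Omega p(\sigma)\div\bV\dd x$. The differences are minor: your single test function $\frac12(|\bV|^2-|\bU_\infty|^2)$ is exactly the sum of the paper's two tests $\frac12|\bV-\bU_\infty|^2$ and $\bU_\infty\cdot(\bV-\bU_\infty)$; you integrate by parts directly where the paper introduces the solenoidal field $\bU_\infty^R$ (in your version this only requires noting that $\ba_\infty-\bomega\times x$ has zero flux through large spheres); and you treat the admissibility of the test functions at $\partial\Omega$ and in time more carefully than the paper does.
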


\begin{proof}
    
We first use the momentum equation~\eqref{eq:mom.weak} tested by $\bV - \bU_\infty$ to see
\begin{align*}
    &\left[ \int_\Omega \rho \bu \cdot (\bV - \bU_\infty) \dd x \right]_{t=0}^{t=\tau} = \int_0^\tau \int_\Omega \rho \bu \cdot \del_t \bV \dd x \dd t 
    - \int_0^\tau \int_\Omega \rho (\bomega\times\bu)\cdot (\bV-\bU_\infty) \dd x \dd t
    \\
    &\qquad+ \int_0^\tau \int_\Omega \rho (\bu-\bomega\times x) \otimes \bu : \nabla (\bV - \bU_\infty) \dd x \dd t - \int_0^\tau \int_\Omega \bS(\nabla \bu) : \nabla (\bV - \bU_\infty) \dd x \dd t \\
    &\qquad+ \int_0^\tau \int_\Omega p(\rho) \div(\bV - \bU_\infty) \dd x \dd t + \int_0^\tau \int_\Omega \rho \vc f \cdot (\bV - \bU_\infty) \dd x \dd t.
\end{align*}
Note that this choice of test function is admissible due to the definition of $\bU_\infty$ in \eqref{Uinfty} and the requirement that $\bV = \vc a_\infty$ for $|x| > R$.
Similarly, we can use $\frac12 |\bV - \bU_\infty|^2$ as test function in the continuity equation~\eqref{eq:cont.weak}, which yields
\begin{align*}
    \left[ \int_\Omega \frac12 \rho |\bV - \bU_\infty|^2 \dd x \right]_{t=0}^{t=\tau} 
    &= \int_0^\tau \int_\Omega \rho (\bV - \bU_\infty) \cdot \del_t \bV \dd x \dd t \\
    &\quad+ \int_0^\tau \int_\Omega \rho ((\bu-\bomega\times x) \cdot \nabla) (\bV - \bU_\infty) \cdot (\bV - \bU_\infty) \dd x \dd t.
\end{align*}
By the identity
\[
    |\bu - \bV|^2 = |\bu - \bU_\infty|^2 + |\bV - \bU_\infty|^2 - 2 (\bu - \bU_\infty) \cdot ( \bV - \bU_\infty)
\]
and the energy inequality \eqref{enIneq}, we infer
\begin{align*}
    &\left[ \int_\Omega \frac12 \rho |\bu - \bV|^2 + P(\rho | \rho_\infty) \dd x \right]_{t=0}^{t=\tau} + \int_0^\tau \int_\Omega \bS(\nabla \bu) : \nabla (\bu - \bV) \dd x \dd t \\
    &= \left[ \int_\Omega \frac12 \rho |\bu - \bU_\infty|^2 + P(\rho | \rho_\infty) \dd x \right]_{t=0}^{t=\tau} + \int_0^\tau \int_\Omega \bS(\nabla \bu ) : \nabla (\bu - \bU_\infty) \dd x \dd t \\
    &\qquad + \left[ \int_\Omega \frac12 \rho |\bV - \bU_\infty|^2 - \rho (\bu - \bU_\infty) \cdot (\bV - \bU_\infty) \dd x \right]_{t=0}^{t=\tau} + \int_0^\tau \int_\Omega \bS(\nabla \bu) : \nabla (\bU_\infty - \bV) \dd x \dd t\\
    &\leq - \int_0^\tau \int_\Omega p(\rho) \div \bU_\infty \dd x \dd t - \int_0^\tau \int_\Omega \rho (\bu - \bomega \times x) \cdot \nabla \bU_\infty \cdot (\bu - \bU_\infty) \dd x \dd t \\
    &\qquad - \int_0^\tau \int_\Omega \rho (\bomega \times \bU_\infty) \cdot (\bu - \bU_\infty) \dd x \dd t + \int_0^\tau \int_\Omega \rho \vc f \cdot (\bu - \bU_\infty) \dd x \dd t \\
    &\qquad + \int_0^\tau \int_\Omega \rho (\bV - \bU_\infty) \cdot \del_t \bV \dd x \dd t + \int_0^\tau \int_\Omega \rho (\bu-\bomega\times x) \cdot \nabla (\bV - \bU_\infty) \cdot (\bV - \bU_\infty) \dd x \dd t \\
    &\qquad - \int_0^\tau \int_\Omega \rho \bu \cdot \del_t \bV \dd x \dd t 
    + \int_0^\tau \int_\Omega \rho (\bomega\times\bu)\cdot (\bV-\bU_\infty) \dd x \dd t\\
    &\qquad - \int_0^\tau \int_\Omega \rho (\bu-\bomega\times x) \otimes \bu : \nabla (\bV - \bU_\infty) \dd x \dd t + \int_0^\tau \int_\Omega \bS(\nabla \bu) : \nabla (\bV - \bU_\infty) \dd x \dd t \\
    &\qquad - \int_0^\tau \int_\Omega p(\rho) \div(\bV - \bU_\infty) \dd x \dd t - \int_0^\tau \int_\Omega \rho \vc f \cdot (\bV - \bU_\infty) \dd x \dd t \\
    &\qquad + \left[ \int_\Omega \rho \bU_\infty \cdot (\bV - \bU_\infty) \dd x \right]_{t=0}^{t=\tau} 
    + \int_0^\tau \int_\Omega \bS(\nabla\bu) : \nabla (\bU_\infty - \bV) \dd x \dd t \\
    &= - \int_0^\tau \int_\Omega \rho (\bu - \bomega \times x) \cdot \nabla \bU_\infty \cdot (\bu - \bU_\infty) \dd x \dd t \\
    &\qquad - \int_0^\tau \int_\Omega \rho (\bomega\times\bV)\cdot(\bu-\bV) \dd x \dd t + \int_0^\tau \int_\Omega \rho \vc f \cdot (\bu - \bV) \dd x \dd t \\
    &\qquad + \int_0^\tau \int_\Omega \rho (\bV - \bU_\infty - \bu) \cdot (\del_t \bV + ((\bu-\bomega\times x) \cdot \nabla) (\bV - \bU_\infty) ) \dd x \dd t \\
    &\qquad - \int_0^\tau \int_\Omega p(\rho) \div \bV \dd x \dd t + \left[ \int_\Omega \rho \bU_\infty \cdot (\bV - \bU_\infty) \dd x \right]_{t=0}^{t=\tau} ,
\end{align*}
where we additionally used that
\begin{align*}
    (\bomega\times\bu)\cdot (\bV-\bUi) - (\bomega \times \bUi) \cdot (\bu-\bUi) &= (\bomega\times\bu)\cdot (\bV-\bUi) + (\bomega \times \bu)\cdot\bUi \\
    &= -(\bomega\times\bV)\cdot(\bV-\bU).
\end{align*}
Lastly, we generate the term $P(\rho | \rho_\infty)$ that shall replace $P(\rho | \sigma)$. To this end, 
recall~\eqref{eq:H.def} and find
\[
P(\rho | \sigma) - P(\rho | \rho_\infty) 
= - \rho (H'(\sigma) - H'(\rho_\infty)) 
+ \sigma H'(\sigma) - H(\sigma) - \rho_\infty H'(\rho_\infty) + H(\rho_\infty).
\]
With the identity
\begin{align*}
    &\left[ \int_\Omega \sigma H'(\sigma) - H(\sigma) - \rho_\infty H'(\rho_\infty) + H(\rho_\infty) \dd x \right]_{t=0}^{t=\tau}
    \\
    &=\int_0^\tau \frac{\dd}{\dd t} \int_\Omega \sigma H'(\sigma) - H(\sigma) - \rho_\infty H'(\rho_\infty) + H(\rho_\infty) \dd x \dd t
    = \int_0^\tau \int_\Omega \sigma\del_t H'(\sigma)  \dd x \dd t,
\end{align*}
we thus obtain
\begin{align*}
    \left[ \int_\Omega P(\rho | \sigma) -P(\rho | \rho_\infty) \dd x \right]_{t=0}^{t=\tau} 
    &= \left[ \int_\Omega  - \rho (H'(\sigma) - H'(\rho_\infty) ) \dd x \right]_{t=0}^{t=\tau} + \int_0^\tau \int_\Omega \sigma \del_t H'(\sigma) \dd x \dd t.
\end{align*}
We further test the continuity equation~\eqref{eq:cont.weak} with $H'(\sigma) - H'(\rho_\infty)$, which is admissible due to $\sigma=\rho_\infty$ for $|x|>R$, 
to get
\begin{align*}
    \left[ \int_\Omega \rho (H'(\sigma) - H'(\rho_\infty)) \dd x \right]_{t=0}^{t=\tau} = \int_0^\tau \int_\Omega \rho \del_t H'(\sigma) + \rho (\bu-\bomega\times x) \cdot \nabla H'(\sigma) \dd x \dd t.
\end{align*}
With these two additional identities, we find
\begin{align}\label{REI1}
\begin{split}
    &\left[ \int_\Omega \frac12 \rho |\bu - \bV|^2 + P(\rho | \sigma) \dd x \right]_{t=0}^{t=\tau} + \int_0^\tau \int_\Omega \bS(\nabla \bu) : \nabla (\bu - \bV) \dd x \dd t \\
    &= \left[ \int_\Omega \frac12 \rho |\bu - \bV|^2 + P(\rho | \rho_\infty) \dd x\right]_{t=0}^{t=\tau} + \int_0^\tau \int_\Omega \bS(\nabla \bu) : \nabla (\bu - \bV) \dd x \dd t \\
    &\qquad + \left[ \int_\Omega P(\rho | \sigma) - P(\rho | \rho_\infty) \dd x \right]_{t=0}^{t=\tau} \\
    &\leq - \int_0^\tau \int_\Omega \rho (\bu - \bomega \times x) \cdot \nabla \bU_\infty \cdot (\bu - \bU_\infty) \dd x \dd t \\
    &\qquad - \int_0^\tau \int_\Omega \rho (\bomega\times\bV)\cdot(\bu-\bV) \dd x \dd t + \int_0^\tau \int_\Omega \rho \vc f \cdot (\bu - \bV) \dd x \dd t \\
    &\qquad + \int_0^\tau \int_\Omega \rho (\bV - \bU_\infty - \bu) \cdot (\del_t \bV + ((\bu-\bomega\times x) \cdot \nabla) (\bV - \bU_\infty) ) \dd x \dd t \\
    &\qquad - \int_0^\tau \int_\Omega p(\rho) \div \bV \dd x \dd t + \left[ \int_\Omega \rho \bU_\infty \cdot (\bV - \bU_\infty) \dd x \right]_{t=0}^{t=\tau} \\
    &\qquad - \int_0^\tau \int_\Omega \rho \del_t H'(\sigma) + \rho (\bu-\bomega\times x) \cdot \nabla H'(\sigma) \dd x \dd t + \int_0^\tau \int_\Omega \sigma\del_t H'(\sigma)  \dd x \dd t.
\end{split}
\end{align}
Finally, let $\bU_\infty^R$ be a function satisfying
\begin{align*}
\bU_\infty^R \in C^\infty(\R^3),
\qquad
\div \bU_\infty^R = 0, \qquad
\bU_\infty^R(x) = \begin{cases}
\bomega \times x & \text{for }x\in B_R,\\
\vc a_\infty & \text{for }x\in\R^3 \setminus B_{2R}.
\end{cases}
\end{align*}
Existence of such a function can be shown with the help of the Bogovski\u{\i} operator, 
compare the construction of $\bV_\infty$ in~\eqref{eq:V.def} below.
We then have by Gau\ss' theorem
\begin{align*}
0 = \int_0^\tau \int_\Omega \div(p(\sigma) (\bV - \bU_\infty^R)) \dd x \dd t = \int_0^\tau \int_\Omega p(\sigma) \div \bV + (\bV - \bU_\infty^R) \cdot \nabla p(\sigma) \dd x \dd t.
\end{align*}
As $\sigma = \rho_\infty$ outside $B_R$, we have $\nabla p(\sigma) = 0$ in $\R^3 \setminus B_R$,
and we find together with $\nabla p(\sigma) = \sigma \nabla H'(\sigma)$ that
\begin{align*}
0 = \int_0^\tau \int_\Omega p(\sigma) \div \bV + \sigma (\bV - \bomega \times x) \cdot  \nabla H'(\sigma) \dd x \dd t.
\end{align*}
Moreover, using $\bU_\infty \cdot (\bV - \bU_\infty)$ as test function in the continuity equation~\eqref{eq:cont.weak} gives
\begin{align*}
    &\left[ \int_\Omega \rho \bU_\infty \cdot (\bV - \bU_\infty) \dd x \right]_{t=0}^{t=\tau} \\
    &= \int_0^\tau \int_\Omega \rho \bU_\infty \cdot \del_t \bV \dd x \dd t + \int_0^\tau \int_\Omega \rho (\bu-\bomega\times x) \cdot \nabla ( \bU_\infty \cdot (\bV - \bU_\infty) ) \dd x \dd t.
\end{align*}
Employing both identities in~\eqref{REI1} yields
\begin{align*}
     &\left[ \int_\Omega \frac12 \rho |\bu - \bV|^2 + P(\rho | \sigma) \dd x \right]_{t=0}^{t=\tau} + \int_0^\tau \int_\Omega \bS(\nabla (\bu - \bV)) : \nabla (\bu - \bV) \dd x \dd t \\
    &\leq - \int_0^\tau \int_\Omega \rho ((\bu - \bomega \times x) \cdot \nabla) \bU_\infty \cdot (\bu - \bU_\infty) \dd x \dd t \\
    &\qquad - \int_0^\tau \int_\Omega \rho (\bomega\times\bV)\cdot(\bu-\bV) \dd x \dd t + \int_0^\tau \int_\Omega \rho \vc f \cdot (\bu - \bV) \dd x \dd t \\
    &\qquad + \int_0^\tau \int_\Omega \rho (\bV - \bu) \cdot ( \del_t \bU  + ((\bu-\bomega\times x) \cdot \nabla) (\bV - \bU_\infty) ) \dd x\dd t \\
    &\qquad + \int_0^\tau \int_\Omega \rho ((\bu-\bomega\times x) \cdot \nabla) \bU_\infty \cdot (\bV - \bU_\infty) \dd x \dd t \\
    &\qquad - \int_0^\tau \int_\Omega \bS(\nabla \bV) : \nabla (\bu - \bV) \dd x \dd t \\
    &\qquad - \int_0^\tau \int_\Omega ( p(\rho) - p(\sigma)) \div \bV \dd x \dd t \\
    &\qquad - \int_0^\tau \int_\Omega (\rho - \sigma) \del_t H'(\sigma) + (\rho (\bu-\bomega\times x) - \sigma (\bV - \bomega \times x)) \cdot \nabla H'(\sigma) \dd x \dd t.
\end{align*}
The first and the fourth integral on the right-hand side 
add up to
\begin{align*}
    &- \int_0^\tau \int_\Omega \rho ((\bu - \bomega \times x) \cdot \nabla) \bU_\infty \cdot (\bu - \bU_\infty) \dd x \dd t + \int_0^\tau \int_\Omega \rho ((\bu-\bomega\times x) \cdot \nabla) \bU_\infty \cdot (\bV - \bU_\infty) \dd x \dd t \\
    &= \int_0^\tau \int_\Omega \rho ((\bu-\bomega\times x) \cdot \nabla) \bU_\infty \cdot (\bV - \bu) \dd x \dd t, 
\end{align*}
so that we finally find~\eqref{REI}, provided that $(\sigma, \bU) = (\rho_\infty, \vc a_\infty)$ for $|x|>R$.
\end{proof}

To omit the additional assumptions from Lemma~\ref{lem:rei.compact},
we proceed with an approximation argument.
\begin{proof}[Proof of Theorem~\ref{thm:rei}]
    Let $\phi\in C^\infty(\R)$ with $\phi(s)=1$ for $s<1$ and $\phi(s)=0$ for $s>2$, and define $\phi_R(x)=\phi(|x|/R)$ for $R>1$.
    We approximate $(\sigma, \bV)$ by pairs $(\sigma_R,\bV_R)$ defined as
    \[
    \sigma_R(t,x)= \phi_R(x) \sigma(t,x) + (1-\phi_R(x)) \rho_\infty,
    \qquad
    \bV_R(t,x)=\phi_R(x) \bV(t,x) + (1-\phi_R(x)) \ba_\infty
    \]
    such that $\supp(\sigma_R-\rho_\infty, \bV_R-\ba_\infty) \subset (0,T)\times B_{2 R}$. By Lemma~\ref{lem:rei.compact}, we then have the REI
    \begin{equation}\label{REI.R}
    \begin{aligned}
     &\left[ \int_\Omega E(\rho, \bu | \sigma_R, \bV_R)  \dd x \right]_{t=0}^{t=\tau} + \int_0^\tau \int_\Omega \bS(\nabla \bu ) : \nabla (\bu - \bV_R) \dd x \dd t \\
    &\leq 
    \int_0^\tau \int_\Omega \rho (\bV_R - \bu) \cdot ( \del_t \bV_R  + \bomega\times\bV_R  + ((\bu-\bomega\times x) \cdot \nabla) \bV_R  - \vc f ) \dd x\dd t \\
    &\qquad - \int_0^\tau \int_\Omega ( p(\rho) - p(\sigma_R)) \div \bV_R \dd x \dd t \\
    &\qquad - \int_0^\tau \int_\Omega (\rho - \sigma_R) \del_t H'(\sigma_R) + (\rho (\bu-\bomega\times x) - \sigma_R (\bV_R - \bomega \times x)) \cdot \nabla H'(\sigma_R) \dd x \dd t
\end{aligned}
\end{equation}
for a.a.~$\tau\in(0,T)$.
To shorten the presentation, we have collected the viscous terms on the left-hand side.
For conclusion of~\eqref{enIneq}, we now pass to the limit $R\to\infty$ 
in each term separately.

First of all, we have
\[
\begin{aligned}
&E(\rho, \bu | \sigma, \bV) - E(\rho, \bu | \sigma_R, \bV_R)
\\
&=\frac{1}{2}\rho\big[-2(\bu-\ba_\infty)\cdot(\bV-\ba_\infty)
+|\bV-\ba_\infty|^2
+2(\bu-\ba_\infty)\cdot(\bV_R-\ba_\infty)
-|\bV_R-\ba_\infty|^2\big]
\\
&\qquad
+\int_0^1 \frac{\mathrm d}{\mathrm d\theta}P(\rho\mid (1-\theta)\sigma_R+\theta \sigma)\dd\theta
\\
&=\frac{1}{2}\rho\big[2(\phi_R-1)(\bu-\ba_\infty)\cdot(\bV-\ba_\infty)
+(1-\phi_R^2)|\bV-\ba_\infty|^2\big]
\\
&\qquad
-(1-\phi_R)(\sigma -\rho_\infty)\int_0^1 H''\big(\sigma - (1-\theta)(1-\phi_R)(\sigma-\rho_\infty)\big)
\big[\rho-\sigma - (1-\theta)(1-\phi_R)(\sigma-\rho_\infty) \big]\dd\theta,
\end{aligned}
\]
where we used~\eqref{eq:H.def}.
Since $\sqrt{\rho}|\bu-\ba_\infty|\in L^\infty(0,T;L^2(\Omega))$ 
and $\rho = (\rho - \rho_\infty) + \rho_\infty \in L^\infty(0,T; [L^2 + L^\gamma + L^\infty] (\Omega))$, see Definition~\ref{def:wksol},
the integrability properties of $\bV-\ba_\infty$ stated in~\eqref{regStr} imply
that the first term on the right-hand converges to $0$ in $L^\infty(0,T;L^1(\Omega))$ as $R\to\infty$, which is due to the fact that $\phi_R, \phi_R^2 \stackrel{*}{\rightharpoonup} 1$ in $L^\infty(\Omega)$.
For the second term, we use the identity
\[
\begin{aligned}
&(1-\phi_R)(\sigma-\rho_\infty)\big[\rho-\sigma - (1-\theta)(1-\phi_R)(\sigma-\rho_\infty) \big]
\\
&\quad
=(1-\phi_R)(\sigma-\rho_\infty)(\rho-\rho_\infty)-(1-\phi_R)\big[1- (1-\theta)(1-\phi_R)\big](\sigma-\rho_\infty)^2 
\end{aligned}
\]
and conclude the convergence to $0$ in a similar way,
using that convex combinations of $\sigma$ and $\rho_\infty$ are bounded from above and below.
In summary, we see $ E(\rho, \bu | \sigma_R, \bV_R)\to E(\rho, \bu | \sigma, \bV)$ in $L^\infty(0,T;L^1(\Omega))$ as $R\to\infty$.

For the diffusive term, note that
\[
\begin{aligned}
&\bS(\nabla \bu) : \nabla (\bu - \bV)-\bS(\nabla \bu) : \nabla (\bu - \bV_R)
=\bS(\nabla\bu): \big[
(1-\phi_R)\nabla\bV - (\bV-\ba_\infty)\otimes\nabla\phi_R)\big].
\end{aligned}
\]
Due to the convergences 
$\phi_R\stackrel{*}{\rightharpoonup} 1$ in $L^\infty(\Omega)$ and 
$\nabla\phi_R\rightharpoonup 0$ in $L^3(\Omega)$ as $R\to\infty$,
which one readily verifies,
and due to~\eqref{regStr}, 
which also implies $\bV-\ba_\infty\in L^2(0,T;L^6(\Omega))$
by Sobolev inequality,
we conclude the convergence
$\bS(\nabla \bu) : \nabla (\bu - \bV_R)\to\bS(\nabla \bu) : \nabla (\bu - \bV)$
in $L^1((0,T)\times\Omega)$ as $R\to\infty$.

To derive convergence of the first term on the right-hand side of~\eqref{REI.R}, 
we observe that
\[
\begin{aligned}
&\rho (\bV - \bu) \cdot ( \del_t \bV  + \bomega\times\bV  + ((\bu-\bomega\times x) \cdot \nabla) \bV  - \vc f )
\\
&\qquad
-\rho (\bV_R - \bu) \cdot ( \del_t \bV_R  + \bomega\times\bV_R  + ((\bu-\bomega\times x) \cdot \nabla) \bV_R  - \vc f )
\\
&=(1-\phi_R)\rho(\bV-\ba_\infty)\cdot ( \del_t \bV  + \bomega\times\bV  + ((\bu-\bomega\times x) \cdot \nabla) \bV  - \vc f )
\\
&\qquad
+ (1-\phi_R)\rho (\bV - \bu) \cdot (\del_t \bV  + \bomega\times(\bV-a_\infty)  + ((\bu-\bomega\times x) \cdot \nabla) \bV) 
\\
&\qquad+\rho (\bV - \bu)\cdot (\bV-\ba_\infty) ((\bu-\bomega\times x) \cdot \nabla\phi_R).
\end{aligned}
\]
In virtue of the integrability assumptions from~\eqref{eq:reg.weak} and~\eqref{regStr}
and the aforementioned convergence properties of $\phi_R$ and $\nabla\phi_R$,
one concludes that all terms on the right-hand side converge to $0$ in $L^1((0,T)\times\Omega)$ as $R\to\infty$.

For the remaining terms in~\eqref{REI.R}, we can use similar arguments as before.
In particular, for the last term, we use the representation
\[
\begin{aligned}
&(\rho - \sigma) \del_t H'(\sigma) + (\rho (\bu-\bomega\times x) - \sigma (\bV - \bomega \times x)) \cdot \nabla H'(\sigma)
\\
&
=H''(\sigma)\big[(\rho{-}\rho_\infty)-(\sigma{-}\rho_\infty)\big]\del_t \sigma
+H''(\sigma)\big[\sqrt{\rho} \sqrt{\rho}(\bu-\ba_\infty)
-\sigma (\bV-\ba_\infty)
+ (\rho{-}\sigma)(\ba_\infty-\bomega\times x)\big]
\cdot\nabla \sigma.
\end{aligned}
\]
Since $\rho-\rho_\infty\in L^\infty(0,T; [L^2 + L^\gamma](\Omega))$,
and $\sqrt{\rho}(\bu-\ba_\infty)\in L^\infty(0,T;L^2(\Omega))$,
this explains the regularity assumptions on $\del_t \sigma$ and $\nabla \sigma$ in~\eqref{regStr}.
Finally, passing to the limit $R\to\infty$ in~\eqref{REI.R} leads to~\eqref{REI} 
and completes the proof.
\end{proof}

\begin{Remark}
    In the proof of Lemma~\ref{lem:rei.compact} 
    we did not make use of any structural assumptions on the pressure $p$,
    so that the statement keeps valid for any $p\in C^0([0,\infty)) \cap C^1((0,\infty))$.
    For the approximation argument to 
    obtain Theorem~\ref{thm:rei},
    we use the growth conditions from~\eqref{assPress},
    but the restriction on $\gamma$ could be relaxed to $\gamma>1$.
\end{Remark}

\section{Weak-strong uniqueness}
\label{sec:wsu}

In this section, we show that a weak solution to \eqref{NSE} coincides with a strong one emanating from the same initial data, at least as long as the latter exists. 
To this end, we make use of the following Korn-type inequality.

\begin{Lemma}\label{lem:Korn}
    Let $p\in[1,\infty)$. For any $\bw\in L^p(\R^3;\R^3)$ with $\nabla \bw\in L^2(\R^3;\R^{3\times 3})$ it holds
    \begin{align*}
        \sqrt{2} \|\nabla \bw\|_{L^2(\R^3)} \leq  \Big\| \nabla \bw + \nabla^T \bw - \frac23 \div \bw \Id \Big\|_{L^2(\R^3)}.
    \end{align*}
\end{Lemma}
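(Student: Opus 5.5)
The plan is to first prove the inequality for smooth compactly supported fields by an explicit algebraic computation plus integration by parts, and then extend it to the stated class by density. The identity in the smooth case comes out as
\[
\int_{\R^3}\Big|\nabla\bw+\nabla^T\bw-\tfrac23\div\bw\Id\Big|^2\dd x = 2\|\nabla\bw\|_{L^2(\R^3)}^2+\tfrac23\|\div\bw\|_{L^2(\R^3)}^2 ,
\]
which implies the claim.

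\textbf{Pointwise algebra.} Write $S=\nabla\bw+\nabla^T\bw$, so that $\operatorname{tr}S=2\div\bw$, and set $c=\frac23\div\bw$. Then
\[
|S-c\Id|^2=|S|^2-2c\operatorname{tr}S+3c^2=|S|^2-\tfrac43(\div\bw)^2 .
\]
Moreover $|S|^2=2|\nabla\bw|^2+2\,\del_j u_i\,\del_i u_j$. Hence, pointwise,
\[
\Big|\nabla\bw+\nabla^T\bw-\tfrac23\div\bw\Id\Big|^2=2|\nabla\bw|^2+2\,\del_ju_i\,\del_iu_j-\tfrac43(\div\bw)^2 .
\]

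\textbf{Integration by parts for $\bw\in C_c^\infty(\R^3;\R^3)$.} Integrating by parts twice (moving $\del_j$ then $\del_i$) gives $\int\del_ju_i\,\del_iu_j\dd x=\int\del_iu_i\,\del_ju_j\dd x=\|\div\bw\|_{L^2}^2$. Substituting into the pointwise identity yields the displayed identity above. In particular $\sqrt2\|\nabla\bw\|_{L^2}$ is bounded by the deviatoric norm. Both sides of the inequality are continuous with respect to $\nabla\bw$ in $L^2$, so it only remains to approximate.

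\textbf{Density (the main obstacle).} We need $\bw_n\in C_c^\infty$ with $\nabla\bw_n\to\nabla\bw$ in $L^2(\R^3)$.
\begin{enumerate}
\item Since $\nabla\bw\in L^2(\R^3)$, the homogeneous Sobolev theory (e.g.\ Galdi's results on $D^{1,2}(\R^3)$) gives a constant vector $\vc c$ with $\bw-\vc c\in L^6(\R^3)$.
\item The assumption $\bw\in L^p$ with $p<\infty$ forces $\vc c=0$. Otherwise $|\bw|\geq|\vc c|/2$ on a set of infinite measure, which contradicts integrability. This is the one place where $p<\infty$ is used.
\item Mollify, setting $\bv_\eps=\bw*\eta_\eps$. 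Then $\bv_\eps$ is smooth, $\bv_\eps\in L^6$, and $\nabla\bv_\eps\to\nabla\bw$ in $L^2$.
\item Cut off with $\phi_R(x)=\phi(|x|/R)$ as in the proof of Theorem~\ref{thm:rei}. Then $\nabla(\phi_R\bv_\eps)=\phi_R\nabla\bv_\eps+\bv_\eps\otimes\nabla\phi_R$.
\item By H\"older, $\|\bv_\eps\otimes\nabla\phi_R\|_{L^2}\le\|\bv_\eps\|_{L^6(B_{2R}\setminus B_R)}\|\nabla\phi_R\|_{L^3}$. The factor $\|\nabla\phi_R\|_{L^3}$ is bounded independently of $R$ by scaling, and the $L^6$ tail tends to $0$ as $R\to\infty$.
\end{enumerate}
A diagonal choice of $\eps$ and $R$ then gives the required approximating sequence and concludes the proof.
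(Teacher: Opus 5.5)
Your proof is correct and follows the paper's route. You use the same integration-by-parts identity, $\|\nabla\bw+\nabla^T\bw-\frac23\div\bw\Id\|_{L^2}^2=2\|\nabla\bw\|_{L^2}^2+\frac23\|\div\bw\|_{L^2}^2$ for $\bw\in C_c^\infty$, followed by density; the paper only calls that density step ``a standard approximation argument,'' and your version (using the homogeneous Sobolev embedding to show the constant is zero so that $\bw\in L^6$, mollifying, then cutting off with the $R$-uniform bound on $\|\nabla\phi_R\|_{L^3}$) is a valid way to fill it in.
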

\begin{proof}
    First, let $\bw\in C_c^\infty(\R^3;\R^3)$. 
    Then integration by parts yields
    \[
    \Big\| \nabla \bw + \nabla^T \bw - \frac23 \div \bw \Id \Big\|_{L^2(\R^3)}^2
    =\int_{\R^3} 2|\nabla \bw|^2 + 2\nabla \bw:\nabla\bw^T - \frac{4}{3}(\div\bw)^2 \dd x
    =\int_{\R^3} 2|\nabla \bw|^2 +\frac{2}{3}(\div\bw)^2 \dd x.
    \]
    Since $(\div\bw)^2\geq 0$, this gives the asserted inequality for $\bw\in C_c^\infty(\R^3;\R^3)$.
    The general case follows by a standard approximation argument.
\end{proof}

Using the REI established in Theorem~\ref{thm:rei},
we derive the following weak-strong uniqueness result.
Our argument follows the presentation in \cite{Basaric2022} and \cite{FeireislNovotny2021weak}.

\begin{Theorem}
    \label{thm:wsu}
    Let $(\rho, \bu)$ be a finite-energy weak solution to system \eqref{NSE} with initial data $(\rho_0, \bu_0)$.
    Let further $(\sigma, \bV) = (r, \bv)$ belong to the regularity class specified in \eqref{regStr} and satisfy 
    \begin{equation}
    \label{eq:wsu.regularity}
        \div \bS(\nabla \bv)\in L^\infty(0,T; [L^3 \cap L^{\frac{6\gamma}{5\gamma-6}}] (\Omega)),
    \qquad
    \nabla\bv\in L^\infty(0,T;L^\infty(\Omega)).
    \end{equation}
    Moreover, assume that $(r, \bv)$ satisfies the system~\eqref{NSE} with the same initial data as $(\rho, \bu)$.
    Then $\rho = r$ and $\bu = \bv$ a.e.~in $(0,T) \times \Omega$.
\end{Theorem}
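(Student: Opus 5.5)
We apply the relative energy inequality of Theorem~\ref{thm:rei} with $(\sigma,\bV)=(r,\bv)$. Since both solutions have the same initial data, the relative energy vanishes at $t=0$. The plan is to rewrite the right-hand side of \eqref{REI} using the equations satisfied by $(r,\bv)$, so that everything is bounded by $\int_0^\tau h(t)\int_\Omega E(\rho,\bu|r,\bv)\dd x\dd t$ with $h\in L^1(0,T)$, plus a small multiple of the dissipation. Gronwall's lemma then gives $E(\rho,\bu|r,\bv)=0$, hence $\rho=r$ and $\bu=\bv$ wherever $\rho>0$. Strict convexity of $H$ yields $\rho=r\geq\underline{\sigma}>0$, so in fact $\bu=\bv$ a.e.

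First step: since $(r,\bv)$ is a strong solution, we have
\[
r\big(\del_t\bv+\bomega\times\bv+((\bv-\bomega\times x)\cdot\nabla)\bv-\vc f\big)=\div\bS(\nabla\bv)-\nabla p(r),
\]
and $\del_t r+\div(r(\bv-\bomega\times x))=0$. We write $\bu-\bomega\times x=(\bv-\bomega\times x)+(\bu-\bv)$ in the convective term. This produces $\rho(\bv-\bu)\cdot((\bu-\bv)\cdot\nabla)\bv$, which is bounded by $\|\nabla\bv\|_{L^\infty}\int_\Omega E$. The remainder becomes $\frac{\rho}{r}(\bv-\bu)\cdot(\div\bS(\nabla\bv)-\nabla p(r))$.

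Combining this with $-\int\bS(\nabla\bv):\nabla(\bu-\bv)=\int\div\bS(\nabla\bv)\cdot(\bu-\bv)$ (valid since $\bu-\bv=0$ on $\del\Omega$) leaves
\[
\int\Big(\frac{\rho}{r}-1\Big)(\bv-\bu)\cdot\div\bS(\nabla\bv).
\]
For the pressure and $H'$ terms we use $\nabla H'(r)=\nabla p(r)/r$ and $\del_t H'(r)=H''(r)\del_t r$ together with the continuity equation for $r$. The standard cancellation, as in \cite{feireisl2012relative}, then reduces them to
\[
-\int\big(p(\rho)-p'(r)(\rho-r)-p(r)\big)\div\bv,
\]
which is bounded by $C\|\div\bv\|_{L^\infty}\int P(\rho|r)$ because $P(\rho|r)$ dominates this quantity when $r\in[\underline\sigma,\overline\sigma]$. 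The leftover terms $(\rho-r)(\bv-\bu)\cdot\nabla p(r)/r$ cancel against the matching contributions from $\rho(\bv-\bu)\cdot\nabla H'(r)$.

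The main obstacle is the term $\int(\frac{\rho}{r}-1)(\bv-\bu)\cdot\div\bS(\nabla\bv)$, which is where \eqref{eq:wsu.regularity} enters. We split $\rho$ into an essential part ($\underline\sigma/2\le\rho\le2\overline\sigma$) and a residual part. On the essential part, $|\rho-r|^2\lesssim P(\rho|r)$ and $\rho\gtrsim1$, so Young's inequality gives a bound by $C\|\div\bS(\nabla\bv)\|_{L^\infty}\int E$. If $L^\infty$ is not available, we use the $L^3$ bound with $\|\bu-\bv\|_{L^6}$ together with a Hölder/interpolation argument. On the residual part we estimate $\|\bu-\bv\|_{L^6}\le C\|\nabla(\bu-\bv)\|_{L^2}$ by Sobolev's inequality, since $\bu-\bv\in D^{1,2}$ vanishes on $\del\Omega$. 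We then use $|\rho-r|\mathbbm 1_{\res}\lesssim P(\rho|r)+P(\rho|r)^{1/\gamma}\rho^{...}$, with $\rho\mathbbm 1_{\rho\ge 2\overline\sigma}\lesssim P$ and $\rho^\gamma\lesssim P$ there. Hölder's inequality with exponents $6$, $\frac{6\gamma}{5\gamma-6}$ and $\gamma$, which is where $\gamma>\frac65$ is needed, gives a bound by $\delta\|\nabla(\bu-\bv)\|_{L^2}^2+C_\delta(\int E+(\int E)^{2/\gamma}\ldots)$. The small-density region, where $|\rho-r|\le r$ and $1\lesssim P(\rho|r)$, is handled with the $L^3$ norm. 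The dissipation term is absorbed via Lemma~\ref{lem:Korn}, applied to the extension by zero of $\bu-\bv$ to $\R^3$, together with $\bS(\nabla\bw):\nabla\bw\ge\frac{\mu}{2}\|\nabla\bw+\nabla^T\bw-\frac23\div\bw\Id\|^2\ge\mu\|\nabla\bw\|_{L^2}^2$. Any superlinear powers of $\int E$ are handled by first obtaining $\int E\le$ const and then closing with Gronwall's lemma on the resulting linear estimate.
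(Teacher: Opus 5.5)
\textbf{Overall.} Your reduction of the relative energy inequality matches the paper's proof. This covers the non-conservative form of the momentum equation for $(r,\bv)$, the splitting $\bu-\bomega\times x=(\bv-\bomega\times x)+(\bu-\bv)$, and the cancellation of the $\nabla H'(r)$ terms down to $-\int(p(\rho)-p'(r)(\rho-r)-p(r))\div\bv$. It also covers the Korn/Sobolev absorption of $\|\nabla(\bu-\bv)\|_{L^2}^2$. Your essential-part and small-density estimates via $L^2\cdot L^6\cdot L^3$ plus Young are also fine. Note that the $L^\infty$ bound on $\div\bS(\nabla\bv)$ you try first is not among the hypotheses, so only the $L^3$ route is actually available.

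\textbf{The gap: large-density residual part when $\gamma>2$.} You use H\"older with exponents $\gamma$, $6$, $\frac{6\gamma}{5\gamma-6}$ for every $\gamma$. After Young this gives a term
\[
C\|\div\bS(\nabla\bv)\|_{L^{6\gamma/(5\gamma-6)}}^2\Big(\int_\Omega E(\rho,\bu|r,\bv)\dd x\Big)^{2/\gamma}.
\]
For $\gamma\le 2$ the power is at least $1$, and boundedness of $\int_\Omega E\dd x$ makes it linear, as you say. For $\gamma>2$, however, the power $2/\gamma$ is \emph{sublinear}. An inequality $y(\tau)\le C\int_0^\tau y^{2/\gamma}\dd t$ with $y(0)=0$ does not force $y\equiv 0$; for instance $y=c\,t^{\gamma/(\gamma-2)}$ satisfies it with equality. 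So Gronwall cannot close. Knowing $\int_\Omega E\dd x\le$ const does not help, because the difficulty is near $0$, not near infinity. Your closing remark only addresses superlinear powers, so the case $\gamma>2$ is not covered.

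\textbf{The fix.} Measure the residual density in $L^{\min\{\gamma,2\}}$ rather than $L^\gamma$; this is what the paper does, and it is why both $L^3$ and $L^{6\gamma/(5\gamma-6)}$ appear in \eqref{eq:wsu.regularity}. For $\gamma>2$ one has $\rho^2\lesssim\rho^\gamma$ on $\{\rho\ge 2\overline\sigma\}$, so $\|[\rho]_{\res}\|_{L^2}^2\lesssim\int_\Omega E\dd x$. Pair this with $\|\bu-\bv\|_{L^6}$ and $\|\div\bS(\nabla\bv)\|_{L^3}$. This gives a bound linear in $\int_\Omega E\dd x$ with $p=\max\{\frac{6\gamma}{5\gamma-6},3\}$, as in \eqref{eq:reldiff.res}, and your Gronwall argument then goes through for all $\gamma>\frac65$.
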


\begin{proof}
We use the momentum equation satisfied by $(r, \bv)$, specifically,
\begin{align*}
    \del_t \bv + \bomega\times\bv + ((\bu -\bomega\times x) \cdot \nabla) \bv - \vc f = \frac{1}{r} \div \bS(\nabla \bv) + ((\bu - \bv) \cdot \nabla) \bv - \frac{1}{r} \nabla p(r).
\end{align*}
Moreover, the continuity equation for $(r, \bv)$ implies
\begin{align*}
    (r-\rho) (\del_t H'(r) + \nabla H'(r) \cdot (\bv - \bomega \times x)) &= (r-\rho)H''(r) (\del_t r + \nabla r \cdot (\bv - \bomega \times x)) \\
    &= (r-\rho) H''(r) (-r\div (\bv - \bomega \times x)) \\
    &= (\rho - r)  p'(r)\div \bv.
\end{align*}
Then the REI~\eqref{REI}, together with $r^{-1} \nabla p(r) = \nabla H'(r)$, yields
\begin{equation}\label{mezi1}
\begin{aligned}
    &\left[ \int_\Omega E(\rho, \bu | r, \bv) \dd x \right]_{t=0}^{t=\tau} + \int_0^\tau \int_\Omega \bS(\nabla (\bu - \bv)) : \nabla (\bu - \bv) \dd x \dd t 
    \\
    &\leq \int_0^\tau \int_\Omega \rho (\bv - \bu) \cdot \Bigg[ \frac{1}{r}\div \bS(\nabla \bv)+((\bu - \bv) \cdot \nabla) \bv -\frac{1}{r}\nabla p(r)\Bigg]\dd x \dd t \\
    &\quad - \int_0^\tau \int_\Omega \bS(\nabla \bv):\nabla(\bu-\bv) \dd x \dd t 
    - \int_0^\tau \int_\Omega (p(\rho)-p(r))\div\bv \dd x \dd t\\
    &\quad - \int_0^\tau \int_\Omega (\rho-r)p'(r)\div\bv-\rho(\bu-\bv)\cdot\nabla H'(r)\dd x \dd t
    \\
    &= \int_0^\tau \int_\Omega \rho (\bv - \bu) \cdot ((\bu - \bv) \cdot \nabla) \bv \dd x \dd t + \int_0^\tau \int_\Omega \Big( \frac{\rho}{r} - 1 \Big) (\bv - \bu) \cdot \div \bS(\nabla \bv) \dd x \dd t \\
    &\quad - \int_0^\tau \int_\Omega \big(p(\rho) - p'(r) (\rho - r) - p(r)\big)\div \bv  \dd x \dd t,
\end{aligned}
\end{equation}
where we integrated by parts in the second term of the first inequality and used that $(\bu - \bv) |_{\del \Omega} = 0$. To treat the viscous terms on the left-hand side, we observe that
\begin{align*}
    \int_\Omega \bS(\nabla(\bu - \bv)) : \nabla(\bu - \bv) \dd x &\geq \frac{\mu}{2} \Big\| \nabla (\bu - \bv) + \nabla^T(\bu - \bv) - \frac23 \div(\bu - \bv) \Id \Big\|_{L^2(\Omega)}^2 \\
    &\geq \mu \|\nabla(\bu - \bv)\|_{L^2(\Omega)}^2,
\end{align*}
where we prolonged $\bu$ and $\bv$ by $\bomega \times x$ outside of $\Omega$, and used the Korn-type inequality from Lemma~\ref{lem:Korn}.
The first integral on the right-hand side of \eqref{mezi1} is easily estimated as
\begin{align*}
    \int_0^\tau \int_\Omega \rho (\bv - \bu) \cdot((\bu - \bv) \cdot \nabla) \bv \dd x \dd t &\leq \|\nabla \bv\|_{L^\infty(0,\tau; L^\infty(\Omega))} \int_0^\tau \int_\Omega  \rho |\bu - \bv|^2 \dd x \dd t \\
    &\leq 2\|\nabla \bv\|_{L^\infty(0,\tau; L^\infty(\Omega))} \int_0^\tau \int_\Omega E(\rho, \bu | r, \bv) \dd x \dd t.
\end{align*}
Introducing the essential and residual part of a function $f$ as
\begin{align*}
    [f]_{\ess} = \chi_{\frac12 \leq \rho/r \leq 2} f, && [f]_{\res} = f - [f]_{\ess},
\end{align*}
we find that the relative internal energy is coercive with respect to $\rho$ 
in the sense that
\begin{equation}\label{eq:coerciveP}
    [\rho]_{\res}^\gamma + [1]_{\res} + [\rho - r]_{\ess}^2 \leq C P(\rho | r),
\end{equation}
where $C>0$ only depends on $\underline r$ and $\overline r$ if $\underline r\leq r\leq \overline r$,
see~\cite[Equ.~(4.23)]{FeireslNovotny2022openfluidsystems} or~\cite[Equ.~(5.9)]{JessleJinNovotny2013}.
For the second integral on the right-hand side of \eqref{mezi1}, we now calculate for the residual part that
\begin{align*}
    &\int_\Omega \Big[ \frac{\rho}{r} - 1 \Big]_{\res} (\bv - \bu) \cdot \div \bS(\nabla \bv) \dd x 
    \\
    &\quad\leq  \big(\|[\rho/r]_{\res}\|_{L^\gamma(\Omega)} + \| [1]_{\res} \|_{L^\gamma(\Omega)} \big)\|\bv - \bu\|_{L^6(\Omega)} \|\div \bS(\nabla \bv)\|_{L^\frac{6\gamma}{5\gamma-6}(\Omega)} \\
    &\quad\leq C \|\div \bS(\nabla \bv)\|_{L^\frac{6\gamma}{5\gamma-6}(\Omega)} ^2 (\underline r^{-2}\| [\rho]_{\res} \|_{L^\gamma(\Omega)}^2 + \| [1]_{\res} \|_{L^\gamma(\Omega)}^2) + \delta \|\nabla(\bv - \bu)\|_{L^2(\Omega)}^2,
\end{align*}
where $\delta>0$ is arbitrary, and where we used the Sobolev inequality.
In view of the coercivity estimate~\eqref{eq:coerciveP},
we find for $\frac{6}{5}\leq\gamma \leq 2$ that
\begin{align*}
    \|[\rho]_{\res}\|_{L^\gamma(\Omega)}^2 + \|[1]_{\res}\|_{L^\gamma(\Omega)}^2 &\leq  C \Big( \int_\Omega E(\rho, \bu | r, \bv) \dd x \Big)^\frac{2}{\gamma} 
    \leq  C \int_\Omega E(\rho, \bu | r, \bv) \dd x,
\end{align*}
since also $E(\rho, \bu | r, \bv) \in L^\infty(0,T; L^1(\Omega))$.
If $\gamma > 2$ and $\div \bS(\nabla \bv) \in L^2(0,T; L^3(\Omega))$, we may estimate
\begin{align*}
    &\int_\Omega \Big[ \frac{\rho}{r} - 1 \Big]_{\res} (\bv - \bu) \cdot \div \bS(\nabla \bv) \dd x 
    \\
    &\quad
    \leq \big(\|[\rho/r]_{\res}\|_{L^2(\Omega)} + \| [1]_{\res} \|_{L^2(\Omega)} \big)
    \|\bv - \bu\|_{L^6(\Omega)}  \|\div \bS(\nabla \bv)\|_{L^3(\Omega)} \\
    &\quad
    \leq C \|\div \bS(\nabla \bv)\|_{L^3(\Omega)}^2 (\underline{r}^{-2}\| [\rho]_{\res} \|_{L^2(\Omega)}^2 + \| [1]_{\res} \|_{L^2(\Omega)}^2) + \delta \|\nabla(\bv - \bu)\|_{L^2(\Omega)}^2
\end{align*}
by the Sobolev inequality.
By $\gamma > 2$ and Young's inequality, we have
\begin{align*}
    [\rho]_{\res}^2 + [1]_{\res}^2\lesssim [\rho]_{\res}^\gamma + [1]_{\res} \lesssim E(\rho, \bu | r, \bv)
\end{align*}
due to~\eqref{eq:coerciveP}.
Consequently, for any $\gamma \geq \frac65$, we arrive at
\begin{align}
\label{eq:reldiff.res}
    \int_\Omega \Big[ \frac{\rho}{r} - 1 \Big]_{\res} (\bv - \bu) \cdot \div \bS(\nabla \bv) \dd x 
    \leq \|\div \bS(\nabla \bv)\|_{L^p(\Omega)} ^2
    \int_\Omega E(\rho, \bu | r, \bv) \dd x + \delta \|\nabla(\bv - \bu)\|_{L^2(\Omega)}^2
\end{align}
with $p=\max\{\frac{6\gamma}{5\gamma-6},3\}$.
An analogous argument for the essential part leads to
\begin{equation}
\label{eq:reldiff.ess}
\begin{aligned}
    &\int_\Omega \Big[ \frac{\rho}{r} - 1 \Big]_{\ess} (\bv - \bu) \cdot \div \bS(\nabla \bv) \dd x \leq \underline{r}^{-1} \|[\rho - r]_{\ess}\|_{L^2(\Omega)} \|\bu - \bv\|_{L^6(\Omega)} \|\div \bS(\nabla \bv)\|_{L^3(\Omega)}\\
    &\qquad\leq C \|\div \bS(\nabla \bv)\|_{L^3(\Omega)}^2 \int_\Omega E(\rho, \bu | r, \bv) \dd x + \delta \|\nabla(\bu - \bv)\|_{L^2(\Omega)}^2.
\end{aligned}
\end{equation}
The very last pressure integral in \eqref{mezi1} is handled similarly. 
Here we use Taylor's theorem for the essential part, 
exploiting $p\in C^2((0,\infty))$,
and Young's inequality for the residual part, and find (compare \cite[Section~5.4, Step~4]{JessleJinNovotny2013})
\begin{align*}
    &\int_\Omega \div \bv \big(p(\rho) - p'(r)(\rho - r) - p(r)\big) \dd x 
    \\
    &\quad
    \leq C \|\div \bv \|_{L^\infty(\Omega)} \big(\|[\rho - r]_{\ess}\|_{L^2(\Omega)}^2 + \|[p(\rho)]_{\res}\|_{L^1(\Omega)} + \|[1]_{\res}\|_{L^1(\Omega)}\big) \\
    &\quad
    \leq C \|\div \bv \|_{L^\infty(\Omega)} \int_\Omega E(\rho, \bu | r, \bv) \dd x,
\end{align*}
where we used $p(\rho) \lesssim 1 + \rho^\gamma$ and~\eqref{eq:coerciveP}.

Gathering all estimates, choosing $\delta>0$ small enough to absorb the dissipative terms to the left-hand side of~\eqref{mezi1},
and employing~\eqref{eq:wsu.regularity}, 
we are left with the inequality
\begin{align*}
\left[ \int_\Omega E(\rho, \bu | r, \bv) \dd x \right]_{t=0}^{t=\tau} + \|\nabla( \bu - \bv) \|_{L^2((0,T) \times \Omega)}^2 \leq C \int_0^\tau \int_\Omega E(\rho, \bu | r, \bv) \dd x \dd t.
\end{align*}
Applying Gr\"onwall's inequality completes the proof.
\end{proof}

\section{Low Mach number limit}
\label{sec:lowMach}

In this section, we consider system \eqref{NSE}, but with a rescaled pressure of the form
\begin{equation}\label{NSE-LM}
\left\{\begin{aligned}
    \del_t \rho + \div(\rho (\bu - \bomega \times x)) &= 0 && \mbox{in } (0,T)\times \Omega,\\
\del_t (\rho\bu) + \div ( \rho (\bu - \bomega \times x) \otimes \bu ) + \rho \bomega \times \bu &=  \rho \vc f +  \div \bS(\nabla \bu) - \frac{1}{\eps^2} \nabla p(\rho) && \mbox{in }(0,T)\times \Omega,\\
\bu &=  \bomega \times x && \mbox{on } (0,T)\times \del \Omega,\\
\rho(\cdot,x) \to \rho_\infty,\ \bu(\cdot,x) &\to \vc a_\infty && \mbox{as } |x| \to \infty,\\
\rho(0,\cdot)=\rho_0, \ (\rho \bu)(0,\cdot) &= \vc \rho_0\bu_0 && \mbox{in } \Omega.
\end{aligned}\right.
\end{equation}
The additional factor $\eps^{-2}$ in front of the pressure plays the role of a low Mach number. In turn, we obtain the rescaled relative energy
\begin{align*}
    E_\eps(\rho, \bu | \sigma, \bV) = \frac12 \rho |\bu - \bV|^2 + \frac{1}{\eps^2} P(\rho | \sigma)
\end{align*}
and find the relative energy inequality of the form
\begin{equation}\label{REI-LM}
\begin{aligned}
     &\left[ \int_\Omega E_\eps(\rho, \bu | \sigma, \bV) \dd x \right]_{t=0}^{t=\tau} + \int_0^\tau \int_\Omega \bS(\nabla (\bu - \bV)) : \nabla (\bu - \bV) \dd x \dd t \\
    &\leq \int_0^\tau \int_\Omega \rho (\bV - \bu) \cdot ( \del_t \bV  + \bomega\times\bV  + ((\bu-\bomega\times x) \cdot \nabla) \bV  - \vc f ) \dd x \dd t \\
    &\quad - \int_0^\tau \int_\Omega \bS(\nabla \bV) : \nabla (\bu - \bV) \dd x \dd t \\
    &\quad - \frac{1}{\eps^2} \int_0^\tau \int_\Omega ( p(\rho) - p(\sigma)) \div \bV \dd x \dd t \\
    &\quad - \frac{1}{\eps^2} \int_0^\tau \int_\Omega (\rho - \sigma) \del_t H'(\sigma) + (\rho (\bu-\bomega\times x) - \sigma (\bV - \bomega \times x)) \cdot \nabla H'(\sigma) \dd x \dd t,
\end{aligned}
\end{equation}
which follows from Theorem~\ref{thm:rei}.
We shall use~\eqref{REI-LM} to quantify the convergence 
of solutions to~\eqref{NSE-LM} (for $\varepsilon>0$) as $\varepsilon\to 0$.
This limit passage formally leads to the incompressible Navier--Stokes equations~\eqref{NSE-LM-Limit}.
We specify this low Mach number limit, where the convergence is stated in terms of the relative energy.

\begin{Theorem}\label{thm:singLim}
    For $\eps \in (0, 1)$ let $(\rho_\eps, \bu_\eps)$ be a finite energy weak solution to system \eqref{NSE-LM} in the sense of Definition~\ref{def:wksol}, emanating from the initial data $(\rho_{\eps, 0}, \bu_{\eps, 0})$. Let $(\bU,\Pi)$ be such that the pair $(\sigma,\bV)=(\rho_\infty, \bU)$ lies in the regularity class specified in \eqref{regStr}, with the additional properties
    \begin{align}\label{eq:singLim.assstrong}
    \begin{aligned}
        \div \bS(\nabla \bU) &\in L^2(0,T; [L^2\cap L^3 \cap L^\frac{6\gamma}{5\gamma-6}] (\Omega)), 
        & \partial_t \bU &\in L^\infty(0,T; L^2(\Omega;\R^3)),
        \\
        \nabla \Pi &\in L^2(0,T; [L^2 \cap L^3 \cap L^\frac{6\gamma}{5\gamma-6}] (\Omega)), & 
        \Pi &\in L^2((0,T) \times \Omega),
    \end{aligned}
    \end{align}
    and such that system \eqref{NSE-LM-Limit} is satisfied
    for some initial data $\bU_0$ with $\bU_0-\ba_\infty\in L^2(\Omega;\R^3)$.
    If
    \begin{align}\label{ass:initDat}
        \lim_{\eps \to 0} \int_\Omega E_\eps(\rho_{\eps, 0}, \bu_{\eps, 0} | \rho_\infty, \bU_0) \dd x = 0,
    \end{align}
    then
    \begin{equation}
    \label{eq:limit.lm}
        \lim\limits_{\eps \to 0} \sup_{\tau \in [0, T]} \int_\Omega E_\eps(\rho_{\eps}, \bu_{\eps} | \rho_\infty, \bU)(\tau) \dd x = 0.
    \end{equation}
\end{Theorem}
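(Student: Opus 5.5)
We apply the rescaled relative energy inequality \eqref{REI-LM} with $(\sigma,\bV)=(\rho_\infty,\bU)$, which is admissible by Theorem~\ref{thm:rei} since this pair lies in the class \eqref{regStr}. Because $\sigma\equiv\rho_\infty$ is constant, $\del_t H'(\sigma)=0$ and $\nabla H'(\sigma)=0$, so the last line of \eqref{REI-LM} vanishes. Because $\div\bU=0$, the term with $\eps^{-2}(p(\rho)-p(\sigma))\div\bV$ also vanishes. This is the key simplification: no singular $\eps^{-2}$ terms survive.

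Next we insert the limit momentum equation from \eqref{NSE-LM-Limit} in the form
\[
\del_t\bU+\bomega\times\bU+((\bU-\bomega\times x)\cdot\nabla)\bU-\vc f=\rho_\infty^{-1}\big(\div\bS(\nabla\bU)-\nabla\Pi\big),
\]
and write $((\bu_\eps-\bomega\times x)\cdot\nabla)\bU=((\bU-\bomega\times x)\cdot\nabla)\bU+((\bu_\eps-\bU)\cdot\nabla)\bU$. The first integral then splits into three pieces:
\begin{itemize}
\item[(a)] $\int\rho_\eps(\bU-\bu_\eps)\cdot((\bu_\eps-\bU)\cdot\nabla)\bU$, which is bounded by $2\|\nabla\bU\|_{L^\infty}\int E_\eps$;
\item[(b)] $\int(\rho_\eps/\rho_\infty)(\bU-\bu_\eps)\cdot\div\bS(\nabla\bU)$;
\item[(c)] $-\int(\rho_\eps/\rho_\infty)(\bU-\bu_\eps)\cdot\nabla\Pi$.
\end{itemize}

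\textbf{Viscous term.} In (b) we write $\rho_\eps/\rho_\infty=1+(\rho_\eps-\rho_\infty)/\rho_\infty$. The part with $1$ cancels, after integration by parts, the term $-\int\bS(\nabla\bU):\nabla(\bu_\eps-\bU)$; boundary terms vanish since $\bu_\eps-\bU=0$ on $\del\Omega$. The remainder $\int\frac{\rho_\eps-\rho_\infty}{\rho_\infty}(\bU-\bu_\eps)\cdot\div\bS(\nabla\bU)$ is handled exactly as in \eqref{eq:reldiff.res}--\eqref{eq:reldiff.ess}, using the essential/residual splitting, the coercivity \eqref{eq:coerciveP}, Sobolev and Korn (Lemma~\ref{lem:Korn}). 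The difference now is that $P(\rho|\rho_\infty)\le \eps^2 E_\eps$, so we gain factors of $\eps$. For instance,
\[
\|[\rho_\eps-\rho_\infty]_\ess\|_{L^2}\|\bu_\eps-\bU\|_{L^6}\|\div\bS\|_{L^3}\le C\eps^2\|\div\bS\|_{L^3}^2\int E_\eps+\delta\|\nabla(\bu_\eps-\bU)\|_2^2,
\]
and the residual parts are treated similarly. We use the bound $\int E_\eps\le C$, which follows from the uniform energy bound obtained from the same inequality via Gronwall, or simply keep the term linear in $\int E_\eps$.

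\textbf{Pressure term (main obstacle).} Term (c) carries no $\eps$ a priori. Write it as
\[
-\int(\bU-\bu_\eps)\cdot\nabla\Pi-\int\frac{\rho_\eps-\rho_\infty}{\rho_\infty}(\bU-\bu_\eps)\cdot\nabla\Pi .
\]
The second integral is treated like (b), using $\nabla\Pi\in L^2(L^2\cap L^3\cap L^{6\gamma/(5\gamma-6)})$. For the first, $\int\bU\cdot\nabla\Pi=0$ by $\div\bU=0$, and $\bU\cdot n=(\bomega\times x)\cdot n=\bu_\eps\cdot n$ on $\del\Omega$. The remaining piece is
\[
\int\bu_\eps\cdot\nabla\Pi=\int(\bu_\eps-\bU)\cdot\nabla\Pi=-\int\Pi\,\div\bu_\eps ,
\]
which is justified by $\Pi\in L^2$ and $\bu_\eps-\ba_\infty\in W^{1,2}$ with decay at infinity. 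This is $O(1)$, not small. We therefore use the continuity equation: test \eqref{eq:cont.weak} for $\rho_\eps$ with $\Pi$. Since $\div(\bomega\times x)=0$ and $\Pi$ is suitably regular in time via the equation, or approximated, this gives
\[
\int\Pi\div(\rho_\eps(\bu_\eps-\bomega\times x))=-\int\Pi\del_t\rho_\eps .
\]
We then rewrite $\div\bu_\eps=\rho_\infty^{-1}\big(\div(\rho_\eps(\bu_\eps-\bomega\times x))-\div((\rho_\eps-\rho_\infty)(\bu_\eps-\bomega\times x))\big)$, so every contribution involves $\rho_\eps-\rho_\infty$, which is $O(\eps)$ in $L^2+L^\gamma$ from $P\le\eps^2E_\eps$ and the energy bound. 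Concretely, $-\int_0^\tau\int\Pi\,\div\bu_\eps$ becomes $\rho_\infty^{-1}\big[\int(\rho_\eps-\rho_\infty)\Pi\big]_0^\tau$ plus integrals of $(\rho_\eps-\rho_\infty)$ against $\del_t\Pi$ and $(\bu_\eps-\bomega\times x)\cdot\nabla\Pi$, all of size $O(\eps)$. The growth of $\bomega\times x$ and the time derivative of $\Pi$ are the delicate points. If $\del_t\Pi$ is unavailable, we would instead mollify $\Pi$ in time, or test the momentum equation with the gradient-free part. This step is the one I expect to need the most care, and where the hypotheses \eqref{eq:singLim.assstrong} (including $\del_t\bU\in L^\infty L^2$) are used.

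\textbf{Conclusion.} Collecting everything and absorbing the $\delta$-terms into the dissipation via Korn, we obtain
\[
\int E_\eps(\tau)\le\int E_\eps(0)+h(\eps)+\int_0^\tau a(t)\int E_\eps\,\dd t,
\]
with $a\in L^1(0,T)$ and $h(\eps)\to0$. Gronwall's inequality together with \eqref{ass:initDat} then yields \eqref{eq:limit.lm} uniformly in $\tau$.
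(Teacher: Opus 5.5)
Your handling of the viscous term and of the quadratic convective term is fine. There is, however, a genuine gap in the pressure term (c), in its $\eps$-free piece $-\int_0^\tau\int_\Omega(\bU-\bu_\eps)\cdot\nabla\Pi\dd x\dd t=-\int_0^\tau\int_\Omega\Pi\div\bu_\eps\dd x\dd t$. Turning this into terms involving $\rho_\eps-\rho_\infty$ by testing the continuity equation with $\Pi$ is the argument the paper uses for the \emph{quantitative} Theorem~\ref{thm:singLimRate}. That argument needs the additional hypotheses \eqref{eq:sigLimRate.pressurereg}, which Theorem~\ref{thm:singLim} does not assume.

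Under \eqref{eq:singLim.assstrong} you only know $\Pi\in L^2((0,T)\times\Omega)$ and $\nabla\Pi\in L^2(0,T;[L^2\cap L^3\cap L^{\frac{6\gamma}{5\gamma-6}}](\Omega))$. This causes two problems:
\begin{enumerate}
\item The integral $\int_0^\tau\int_\Omega(\rho_\eps-\rho_\infty)\del_t\Pi\dd x\dd t$ and the endpoint terms $\big[\int_\Omega(\rho_\eps-\rho_\infty)\Pi\dd x\big]_{t=0}^{t=\tau}$ are not even defined.
\item More seriously, the transport contribution $\int_0^\tau\int_\Omega(\rho_\eps-\rho_\infty)(\ba_\infty-\bomega\times x)\cdot\nabla\Pi\dd x\dd t$ carries the linearly growing weight $|x|$. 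Nothing in the hypotheses makes it finite, let alone $O(\eps)$.
\end{enumerate}
You flag both points as delicate but do not resolve them. Mollifying $\Pi$ in time does nothing about the weight. ``Testing with the gradient-free part'' is not an argument. The assumption $\del_t\bU\in L^\infty(0,T;L^2(\Omega))$ plays no role here. So the claim that every contribution is $O(\eps)$ is unsupported, and without it your Gronwall step has no vanishing $h(\eps)$.

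The paper closes this step by compactness rather than smallness. First, Lemma~\ref{lem:bounds} gives $\eps$-uniform bounds, obtained with the solenoidal comparison field $\bV_\infty$. Then Lemma~\ref{lem:conv} extracts a limit $\bu$ and shows $\div\bu=0$ by passing to the limit in the continuity equation. Finally, split $\int_0^\tau\int_\Omega(\bU-\bu_\eps)\cdot\nabla\Pi\dd x\dd t=I_1+I_2$ into essential and residual parts:
\begin{itemize}
\item $I_2\lesssim\|\nabla(\bU-\bu_\eps)\|_{L^2((0,T)\times\Omega)}\|\nabla\Pi\|_{L^2((0,T)\times\Omega)}\|[1]_{\reseps}\|_{L^\infty(0,T;L^3(\Omega))}\to0$;
\item $I_1\to\int_0^\tau\int_\Omega(\bU-\bu)\cdot\nabla\Pi\dd x\dd t=0$, because $\bU$ and $\bu$ are both solenoidal and agree on $\del\Omega$.
\end{itemize}
This yields only an unquantified $o_\eps$, first along a subsequence and then for the whole family since the limit is unique. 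That is exactly why a rate requires \eqref{eq:sigLimRate.pressurereg}.

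If you want to keep your direct route, you would have to do the following:
\begin{itemize}
\item replace $\Pi$ by $\chi_R\Pi_h$, with a radial spatial cut-off $\chi_R$ and a temporal mollification $\Pi_h$;
\item bound the replacement error using an $\eps$-uniform bound on $\bU-\bu_\eps$ in $L^2((0,T)\times\Omega)$;
\item couple $R=R(\eps)\to\infty$ and $h=h(\eps)\to0$ so that the resulting terms of order $R\eps^{\kappa}$ and $h^{-1}\eps^{\kappa}$, $\kappa>0$, still vanish.
\end{itemize}
This argument is absent from your proposal.
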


The existence of local-in-time strong solutions $(\bU,\Pi)$ to \eqref{NSE-LM-Limit} 
was shown in~\cite[Therorem~1]{GaldiSilvestre_NSErot2005} for $\ba_\infty=0$, and global-in-time strong solutions were found in \cite[Theorem~2]{GaldiSilvestre_NSErot2005} for small initial data and small rotation. We emphasize that the regularity of the strong solutions in \cite{GaldiSilvestre_NSErot2005} differs from ours.
For extensions of this existence result, 
allowing even for time-dependence of $\ba_\infty$ and $\bomega$, 
see the monograph~\cite{NKNP}, 
the recent article~\cite{AsamiHishida2025reggenOseen}, 
and references therein.

Under additional assumptions on the pressure $\Pi$, we can show the following quantified version of Theorem~\ref{thm:singLim}:
\begin{Theorem}\label{thm:singLimRate}
    Under the assumptions of Theorem~\ref{thm:singLim}, assume additionally that
    \begin{align}
    \label{eq:sigLimRate.pressurereg}
\begin{aligned}
\partial_t\Pi,\,
(\ba_\infty-\bomega\times x)\cdot \nabla\Pi
 \in L^1(0,T; [L^1 \cap L^2 \cap L^{\frac{\gamma}{\gamma-1}}] (\Omega)),
&&
\Pi(0,\cdot) \in [L^1 \cap L^2 \cap L^{\frac{\gamma}{\gamma-1}}] (\Omega).
\end{aligned}
\end{align}
Then
\begin{align*}
    &\sup_{\tau \in [0,T]} \int_\Omega E_\eps (\rho_\eps, \bu_\eps | \rho_\infty, \bU)(\tau) \dd x + \|\nabla(\bu_\eps - \bU)\|_{L^2(0,T; L^2(\Omega))}^2 \\
    &\leq C \int_\Omega E_\eps (\rho_{\eps, 0}, \bu_{\eps, 0} | \rho_\infty, \bU_0) \dd x + C(\eps + \eps^\frac2\gamma),
\end{align*}
where the constant $C>0$ is independent of $\eps$.
\end{Theorem}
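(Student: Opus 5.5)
We would prove the estimate by applying the rescaled relative energy inequality \eqref{REI-LM} with a density corrector that carries the pressure. Instead of $\sigma=\rho_\infty$, we test with
\[
\sigma_\eps = \rho_\infty + \eps^2\frac{\Pi}{p'(\rho_\infty)}, \qquad \bV=\bU .
\]
The point of this choice is that $\eps^{-2}\nabla H'(\sigma_\eps)=\eps^{-2}H''(\rho_\infty)\nabla\sigma_\eps + O(\eps^2|\Pi||\nabla\Pi|)=\rho_\infty^{-1}\nabla\Pi+\dots$, since $H''(\rho_\infty)=p'(\rho_\infty)/\rho_\infty$. The singular $\eps^{-2}$ terms in \eqref{REI-LM} then become $O(1)$ terms that cancel the pressure gradient in the limit momentum equation.

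The first step is to check that $(\sigma_\eps,\bU)$ is admissible in Theorem~\ref{thm:rei}. The integrability of $\sigma_\eps-\rho_\infty$, $\del_t\sigma_\eps$ and $(\ba_\infty-\bomega\times x)\cdot\nabla\sigma_\eps$ is exactly what \eqref{eq:sigLimRate.pressurereg} provides. Together with $\Pi(0)$, it also gives $\Pi\in L^\infty(0,T;[L^1\cap L^2\cap L^{\gamma/(\gamma-1)}])$. The condition $\underline\sigma\le\sigma_\eps\le\overline\sigma$ requires $\eps^2\Pi$ to be uniformly small. I expect this to be the main technical obstacle. I would first try to obtain it from the spatial regularity of $\Pi$ together with $\nabla\Pi\in L^{6\gamma/(5\gamma-6)}$, which gives exponent $>3$ when $\gamma<2$. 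Failing that, I would replace $\Pi$ by a truncation $T_{\eps}(\Pi)$ at level $\eps^{-1}$ and track the extra error, which is again $O(\eps)$.

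Next we expand the right-hand side of \eqref{REI-LM}.

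\textbf{Momentum terms.} Using \eqref{NSE-LM-Limit}, we replace $\del_t\bU+\bomega\times\bU+((\bu_\eps-\bomega\times x)\cdot\nabla)\bU-\vc f$ by
\[
\rho_\infty^{-1}(\div\bS(\nabla\bU)-\nabla\Pi)+((\bu_\eps-\bU)\cdot\nabla)\bU .
\]
The quadratic term is bounded by $2\|\nabla\bU\|_\infty\int E_\eps$. The viscous term is integrated by parts against $-\int\bS(\nabla\bU):\nabla(\bu_\eps-\bU)$, which leaves $\int(\rho_\eps/\rho_\infty-1)(\bU-\bu_\eps)\cdot\div\bS(\nabla\bU)$. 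The pressure term $-\rho_\infty^{-1}\int\rho_\eps(\bU-\bu_\eps)\cdot\nabla\Pi$ is kept for now.

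\textbf{Error terms with $\rho_\eps/\rho_\infty-1$.} We split these into essential and residual parts exactly as in the proof of Theorem~\ref{thm:wsu}. The coercivity \eqref{eq:coerciveP}, now with the factor $\eps^{-2}$, gives
\[
\|[\rho_\eps-\rho_\infty]_\ess\|_{L^2}\lesssim\eps\Big(\int E_\eps\Big)^{1/2},
\qquad
\|[\rho_\eps]_\res\|_{L^\gamma}^\gamma+\|[1]_\res\|_{L^1}\lesssim\eps^2\int E_\eps .
\]
Since $\int E_\eps$ is bounded uniformly, by the energy inequality and \eqref{ass:initDat}, these yield bounds of the form
\[
C\|\div\bS(\nabla\bU)\|^2_{L^p}\Big(\int E_\eps+\eps^2+\eps^{2/\gamma}\Big)+\delta\|\nabla(\bu_\eps-\bU)\|^2_{L^2},
\]
where the power $\eps^{2/\gamma}$ comes from the residual part in $L^\gamma$ when $\gamma\ge 2$ or when it is used linearly. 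The Sobolev inequality and Lemma~\ref{lem:Korn} absorb the gradient term into the dissipation, as before.

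\textbf{Pressure-type terms.} Since $\div\bU=0$, the term $\eps^{-2}\int(p(\rho_\eps)-p(\sigma_\eps))\div\bU$ vanishes. In the last line of \eqref{REI-LM} we write $\rho_\eps(\bu_\eps-\bomega\times x)-\sigma_\eps(\bU-\bomega\times x)=\rho_\eps(\bu_\eps-\bU)+(\rho_\eps-\sigma_\eps)(\bU-\bomega\times x)$. The first piece produces $-\rho_\infty^{-1}\int\rho_\eps(\bu_\eps-\bU)\cdot\nabla\Pi$, up to $O(\eps^2)$ terms. This cancels the pressure term kept from the momentum step. The second piece, combined with the time derivative, gives
\[
\int(\rho_\eps-\sigma_\eps)\,\frac{H''(\sigma_\eps)}{p'(\rho_\infty)}\big(\del_t\Pi+(\bU-\bomega\times x)\cdot\nabla\Pi\big).
\]
We write $\bU-\bomega\times x=(\bU-\ba_\infty)+(\ba_\infty-\bomega\times x)$ and use \eqref{eq:sigLimRate.pressurereg} together with $\nabla\Pi\in L^2(L^2\cap L^3)$ and $\bU-\ba_\infty\in L^\infty(L^2\cap L^4\cap\dots)$. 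We then bound $\rho_\eps-\sigma_\eps=(\rho_\eps-\rho_\infty)-\eps^2\Pi/p'(\rho_\infty)$ with the essential/residual bounds above. This gives $\lesssim g(t)(\int E_\eps+\eps+\eps^{2/\gamma})$ with $g\in L^1(0,T)$, where we use $\eps\sqrt{E}\le \eps^2+E$ and $\eps^2\le\eps$ for $\eps<1$.

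\textbf{Returning to $\sigma=\rho_\infty$ and concluding.} Finally we convert $E_\eps(\rho_\eps,\bu_\eps|\sigma_\eps,\bU)$ back to $E_\eps(\rho_\eps,\bu_\eps|\rho_\infty,\bU)$, both at time $\tau$ and at time $0$. By the formula for $P(\rho|\sigma)-P(\rho|\rho_\infty)$ used in Lemma~\ref{lem:rei.compact}, the difference divided by $\eps^2$ equals $-\eps^{-2}\int(\rho_\eps-\rho_\infty)(H'(\sigma_\eps)-H'(\rho_\infty))+O(\eps^2)$. This is $\int(\rho_\eps-\rho_\infty)\Pi/\rho_\infty+O(\eps^2)$. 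The essential/residual split bounds it by $\tfrac12\int E_\eps+C(\eps^2+\eps^{2/\gamma})$ at $t=\tau$, using $\Pi\in L^\infty(L^2\cap L^{\gamma/(\gamma-1)})$, and similarly at $t=0$, using $\Pi(0)$. Collecting all terms gives
\[
\int E_\eps(\tau)+\|\nabla(\bu_\eps-\bU)\|^2_{L^2(0,\tau;L^2)}\le C\int E_\eps(0)+C(\eps+\eps^{2/\gamma})+\int_0^\tau g\int E_\eps .
\]
Gr\"onwall's inequality then yields the stated bound, with $C$ independent of $\eps$.
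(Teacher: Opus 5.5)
\textbf{Verdict: correct in outline, but a genuinely different route from the paper.} The paper keeps $\sigma=\rho_\infty$ in \eqref{REI-LM}, so every $\eps^{-2}$ term vanishes identically. It then handles the leftover term $\int\tfrac{\rho_\eps}{\rho_\infty}(\bU-\bu_\eps)\cdot\nabla\Pi$ by inserting the solenoidal field $\bV_\infty$ from \eqref{eq:V.def} and testing the \emph{linear} continuity equation with $\Pi$. This produces $\int(\tfrac{\rho_\eps}{\rho_\infty}-1)[\del_t\Pi+(\bU-\bomega\times x)\cdot\nabla\Pi]$ and $[\int(\tfrac{\rho_\eps}{\rho_\infty}-1)\Pi]_{t=0}^{t=\tau}$. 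Both are bounded by $C(\eps+\eps^{2/\gamma})$ directly from Lemma~\ref{lem:bounds}.

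Your modulated density $\sigma_\eps=\rho_\infty+\eps^2\Pi/p'(\rho_\infty)$ instead makes the $\nabla\Pi$ term cancel pointwise within a single application of Theorem~\ref{thm:rei}. The same two groups of terms reappear as your linearized $\eps^{-2}$ terms and as your conversion $P(\cdot|\sigma_\eps)\to P(\cdot|\rho_\infty)$. Your device is the standard Feireisl--Novotn\'y one and generalizes better (e.g.\ to acoustic correctors). The paper's version is linear in $\Pi$, so it needs no pointwise control of $\Pi$, no truncation, and no Taylor remainders of $H$.

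\textbf{Where your route needs repair.}

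\emph{Boundedness of $\sigma_\eps$.} Your first idea for $\underline\sigma\le\sigma_\eps\le\overline\sigma$ fails. The bound $\nabla\Pi\in L^2(0,T;L^q)$ with $q>3$ only gives $\|\Pi(t)\|_{L^\infty}\in L^2(0,T)$, not a bound uniform in $t$. For $\gamma\ge 2$ you only have $q=3$, which gives no $L^\infty$ bound at all.

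\emph{The truncation is therefore mandatory, and it works.} Take $\Pi_\eps=\eps^{-1}T(\eps\Pi)$ and estimate the error in two pieces.
On $\{|\Pi|\le\eps^{-1}\}$, the mismatch $\rho_\infty^{-1}-H''(\sigma_\eps)/p'(\rho_\infty)$ is $O(\eps^2|\Pi|)=O(\eps)$.
On $\{|\Pi|>\eps^{-1}\}$, Chebyshev gives $|\{|\Pi(t)|>\eps^{-1}\}|\le\eps^2\|\Pi(t)\|_{L^2}^2$. Hence the essential part of $\int_{\{|\Pi|>\eps^{-1}\}}\rho_\eps|\bu_\eps-\bU||\nabla\Pi|$ is bounded by $C\eps\|\Pi\|_{L^\infty(0,T;L^2)}\|\bu_\eps-\bU\|_{L^2(0,T;L^6)}\|\nabla\Pi\|_{L^2(0,T;L^3)}$. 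The residual part is $O(\eps^{2/\gamma})$ via H\"older with $L^\gamma\times L^6\times L^{6\gamma/(5\gamma-6)}$.

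\emph{Source of the uniform bounds.} These estimates rely on the $\eps$-uniform bounds of Lemma~\ref{lem:bounds}. They do not follow from the energy inequality \eqref{enIneq} itself, as you claim. In the rescaled setting that inequality contains $\eps^{-2}\int p(\rho_\eps)\div\bU_\infty$, which is not uniformly controlled; this is exactly why the paper passes through $\bV_\infty$.

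\emph{Admissibility in \eqref{regStr}.} For $\gamma>\tfrac32$, assumption \eqref{eq:singLim.assstrong} does not give $\nabla\sigma_\eps\in L^1(0,T;L^4\cap L^{2\gamma/(\gamma-1)})$, which \eqref{regStr} requires. So the approximation step in Theorem~\ref{thm:rei} has to be redone using $\bu_\eps-\ba_\infty\in L^2(0,T;L^6)$. This is the same approximation the paper implicitly needs when it tests the continuity equation with $\Pi$.
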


\begin{Remark}
    Thanks to the coercivity properties of the relative energy, we also have
    \begin{align*}
        &\|[\bu_\eps - \bU]_\esseps\|_{L^\infty(0,T; L^2(\Omega))}^2 + \frac{1}{\eps^2} \|[\rho_\eps - \rho_\infty]_{\ess, \eps}\|_{L^\infty(0,T; L^2(\Omega))}^2 + \frac{1}{\eps^2} \|[\rho_\eps]_{\res, \eps}\|_{L^\infty(0,T; L^\gamma(\Omega))}^\gamma \\
        &\leq C \int_\Omega E_\eps (\rho_{\eps, 0}, \bu_{\eps, 0} | \rho_\infty, \bU_0) \dd x + C(\eps + \eps^\frac2\gamma),
    \end{align*}
    where we define the $\varepsilon$-dependent essential and residual part of a function $f$ as
    \begin{equation}
    \label{eq:ess.res.eps}
    [f]_{\esseps} = \chi_{\frac12 \leq \rho_\eps / \rho_\infty \leq 2} f, 
    \qquad
    [f]_{\reseps} = f - [f]_{\esseps}.
    \end{equation}
\end{Remark}

The following two subsections are devoted to the proofs of Theorems~\ref{thm:singLim} and \ref{thm:singLimRate}.

\subsection{Qualitative convergence, proof of Theorem~\ref{thm:singLim}}
We split the proof of Theorem~\ref{thm:singLim} 
into several lemmas.
First, we use the REI~\eqref{REI-LM} to derive suitable bounds on $(\rho_\varepsilon, \bu_\varepsilon)$.

\begin{Lemma}\label{lem:bounds}
    The weak solutions $(\rho_\eps,\bu_\eps)$ satisfy the $\eps$-uniform bounds
    \begin{align*}
    \|[\rho_\eps - \rho_\infty]_{\esseps}\|_{L^\infty(0,T; L^2(\Omega))}^2 + \|[\rho_\eps]_{\reseps}\|_{L^\infty(0,T; L^\gamma(\Omega))}^\gamma + \|[1]_{\reseps}\|_{L^\infty(0,T; L^1(\Omega))} &\lesssim \eps^2, \\
    \|[\bu_\eps - \bU_\infty]_{\esseps}\|_{L^\infty(0,T;L^2(\Omega))}^2 + \|\nabla(\bu_\eps - \bU_\infty)\|_{L^2(0,T; L^2(\Omega))}^2 &\lesssim 1,
\end{align*}
where the essential and residual parts are defined as in~\eqref{eq:ess.res.eps}.
\end{Lemma}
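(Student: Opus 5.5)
The bounds will follow from the energy inequality~\eqref{enIneq}, written for the rescaled system~\eqref{NSE-LM}: the pressure terms carry the factor $\eps^{-2}$, and the relative energy $E_\eps(\rho_\eps,\bu_\eps|\rho_\infty,\bU_\infty)$ takes the place of the unscaled one. Alternatively one can apply the REI~\eqref{REI-LM} with $(\sigma,\bV)=(\rho_\infty,\bU_\infty)$ (or its compactly supported version from Lemma~\ref{lem:rei.compact}). Since $\sigma$ is constant, the last line of~\eqref{REI-LM} vanishes.

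The key step is to show that the initial energy $\int_\Omega E_\eps(\rho_{\eps,0},\bu_{\eps,0}|\rho_\infty,\bU_\infty)\dd x$ is bounded uniformly in $\eps$. This follows from~\eqref{ass:initDat} together with $\bU_0-\bU_\infty\in L^2$, using
\[
\rho|\bu-\bU_\infty|^2\le 2\rho|\bu-\bU_0|^2+2\rho|\bU_0-\bU_\infty|^2 .
\]
The last term is controlled by splitting $\rho$ into its essential and residual parts. Since $\bU_0-\bU_\infty$ is only in $L^2$, I would first bound it in $L^\infty$, or mollify it, and otherwise assume the natural regularity of $\bU_0$ inherited from the regularity of $\bU$.

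Next I estimate the right-hand side of~\eqref{enIneq}. The dangerous term is the pressure term $-\eps^{-2}\int p(\rho)\div\bU_\infty$. Because $\div\bU_\infty$ has compact support in $B_1\setminus\ms$ and integrates to zero there, I would rewrite it as
\[
-\eps^{-2}\int\big(p(\rho)-p'(\rho_\infty)(\rho-\rho_\infty)-p(\rho_\infty)\big)\div\bU_\infty
-\eps^{-2}p'(\rho_\infty)\int(\rho-\rho_\infty)\div\bU_\infty .
\]
The first integral is bounded by $C\eps^{-2}\int P(\rho|\rho_\infty)$, as in the proof of Theorem~\ref{thm:wsu}, and is therefore Gronwall-admissible. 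The linear term is the main obstacle. I would remove it by replacing $\bU_\infty$ with a solenoidal reference field, for instance $\bU_\infty^R$ or $\bV_\infty$ constructed with the Bogovski\u{\i} operator; this is admissible because the energy/relative energy inequality does not depend on the choice of reference field. Then $\div\bU_\infty=0$, and the pressure term disappears entirely.

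The remaining terms involve $\rho(\bu-\bomega\times x)\cdot\nabla\bU_\infty\cdot(\bu-\bU_\infty)$, the Coriolis term, and the forcing. Since $\nabla\bU_\infty$ and $\bomega\times\bU_\infty-\bomega\times\ba_\infty$ have compact support, I bound these by $C\int\rho|\bu-\bU_\infty|^2+C\int\rho+\ldots$ on bounded sets. For the forcing I use $\vc f\in L^1\cap L^\infty$ together with $\rho\le C(1+[\rho]_{\reseps})$, applying Young's inequality to the residual part.

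The viscous term is bounded from below via Lemma~\ref{lem:Korn}:
\[
\int\bS(\nabla\bu):\nabla(\bu-\bU_\infty)\ge c\|\nabla(\bu-\bU_\infty)\|^2_{L^2}-C\|\nabla\bU_\infty\|^2_{L^2}.
\]
Gronwall's inequality then gives $\sup_t\int E_\eps\lesssim1$ and the uniform gradient bound. Finally, the coercivity estimate~\eqref{eq:coerciveP} with $r=\rho_\infty$, multiplied by $\eps^{-2}$, yields the $\eps^2$ bounds on the essential and residual density parts. On the essential set $\rho_\eps\ge\rho_\infty/2$, so the bound on $\rho_\eps|\bu_\eps-\bU_\infty|^2$ gives the bound on $[\bu_\eps-\bU_\infty]_{\esseps}$ in $L^2$.
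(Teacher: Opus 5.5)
Your proposal is correct and, once you replace $\bU_\infty$ by the solenoidal Bogovski\u{\i}-corrected field $\bV_\infty$ in the REI with $(\sigma,\bV)=(\rho_\infty,\bV_\infty)$, it is essentially the paper's argument: the $\eps^{-2}$ pressure term then vanishes entirely (so your linear/nonlinear split of it becomes unnecessary), the remaining terms are Gronwall-admissible, and the coercivity of $P(\rho_\eps|\rho_\infty)$ gives the $\eps^2$ bounds. Your point that the residual part of $\rho_{\eps,0}|\bU_0-\bU_\infty|^2$ needs $\bU_0-\ba_\infty\in L^{2\gamma/(\gamma-1)}$ is well taken: this integrability is inherited from the regularity class of $\bU$, and the paper uses it without saying so.
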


\begin{proof}
\providecommand{\Bog}{\mathcal{B}}
Since the limiting system \eqref{NSE-LM-Limit} is incompressible, we wish to take an incompressible test function that is not ``too far away'' from $\bu_\eps$ in the REI \eqref{REI}. To this end, we first recall the notion of the Bogovski\u{\i} operator. 
Let $q\in(1,\infty)$.
By $L_0^q(B_1 \setminus \ms)$ we denote the set of all functions $f \in L^q(B_1 \setminus \ms)$ with $\int_{B_1 \setminus \ms} f \dd x = 0$.
Since $B_1 \setminus \ms$ is a bounded Lipschitz domain, there exists a bounded linear operator $\Bog: (L_0^q \cap C^\infty)(B_1 \setminus \ms) \to C_0^\infty(B_1 \setminus \ms; \R^3)$ such that
\begin{align*}
    \div \Bog(f) &= f \ \text{ in } \ B_1 \setminus \ms, \\
    \|\Bog(f)\|_{W^{1,q}(B_1 \setminus \ms)} &\leq C \|f\|_{L^q(B_1 \setminus \ms)}
\end{align*}
for any $f \in (L_0^q \cap C^\infty)(B_1 \setminus \ms)$, 
see e.g. \cite[Theorem III.3.1]{Galdi_NSE2011}. 
Recalling the definition of $\bU_\infty$ from \eqref{Uinfty}, we immediately see that
\begin{align*}
    \int_{B_1} \div \bU_\infty \dd x &= \int_{\del B_1} \bU_\infty \cdot \vc n \dd S = \int_{\del B_1} \vc a_\infty \cdot \vc n \dd S = 0, \\
    \int_{\ms} \div \bU_\infty \dd x &= \int_{\ms} \div(\bomega \times x) \dd x = 0,
\end{align*}
and hence $\div \bU_\infty \in (L_0^q \cap C^\infty)(B_1 \setminus \ms)$ for any $q\in(1,\infty)$. In turn, we define
\begin{equation}
\label{eq:V.def}
    \bV_\infty = \bU_\infty - \Bog (\div \bU_\infty) \in C^\infty(\R^3; \R^3).
\end{equation}
Note especially that we have $\bV_\infty |_{\partial \ms} = \bU_\infty |_{\partial \ms} = \bomega \times x$ and ${\rm supp} (\bV_\infty - \vc a_\infty) \subset \overline B_1$
due to $\Bog(f)|_{\partial(B_1 \setminus \ms)} = 0$ for any $f \in L_0^q(B_1 \setminus \ms)$. Moreover, $\div \bV_\infty = 0$ in $\R^3$ and
\begin{align}\label{estV}
    \|\bV_\infty - \vc a_\infty\|_{W^{1,q}(\R^3)} \leq C \Big( \|\bU_\infty - \vc a_\infty\|_{W^{1,q}(B_1)} + \|\div \bU_\infty \|_{L^q(B_1)} \Big) \leq C \|\bU_\infty - \vc a_\infty\|_{W^{1,q}(B_1)}.
\end{align}
By~\eqref{ass:initDat}, we further have
\begin{align*}
\int_\Omega E_\eps(\rho_{\eps, 0}, \bu_{\eps, 0} | \rho_\infty, \bU_0) \dd x \lesssim 1 .
\end{align*}
In particular, since $\bU_0-\ba_\infty\in L^2(\Omega)$, 
this implies
\begin{align*}
\int_\Omega E_\eps(\rho_{\eps, 0}, \bu_{\eps, 0} | \rho_\infty, \bV_\infty) \dd x \lesssim 1.
\end{align*}

Now we use the REI \eqref{REI-LM} with test functions $(\rho_\infty, \bV_\infty)$.
In view of $\del_t H'(\rho_\infty) = 0$ and $\nabla H'(\rho_\infty) = 0$, we find 
\begin{equation}\label{REI2}
\begin{aligned}
     &\left[ \int_\Omega E_\eps(\rho_\eps, \bu_\eps | \rho_\infty, \bV_\infty) \dd x\right]_{t=0}^{t=\tau} + \int_0^\tau \int_\Omega \bS(\nabla (\bu_\eps - \bV_\infty)) : \nabla (\bu_\eps - \bV_\infty) \dd x \dd t \\
    &\leq \int_0^\tau \int_\Omega \rho_\eps (\bV_\infty - \bu_\eps) \cdot (\bomega \times \bV_\infty  + ((\bV_\infty - \bomega\times x) \cdot \nabla) \bV_\infty - \vc f ) \dd x \dd t \\
    &\quad + \int_0^\tau \int_\Omega \rho_\eps (\bV_\infty - \bu_\eps) \cdot ((\bu_\eps - \bV_\infty)\cdot \nabla) \bV_\infty \dd x \dd t \\
    &\quad - \int_0^\tau \int_\Omega \bS(\nabla \bV_\infty) : \nabla (\bu_\eps - \bV_\infty) \dd x \dd t.
\end{aligned}
\end{equation}
Note that the choice $(\sigma,\bV)=(\rho_\infty, \bV_\infty)$ satisfies~\eqref{regStr}
in Theorem~\ref{thm:rei}.
In particular, we have $\nabla \bV_\infty = 0$ outside the bounded domain $B_1$, and 
$\nabla \bV_\infty \in (L^1\cap L^\infty)(\R^3)$. 
Having this in mind, we estimate
\begin{align*}
    \int_0^\tau \int_\Omega \rho_\eps (\bV_\infty - \bu_\eps) \cdot ((\bu_\eps - \bV_\infty)\cdot \nabla) \bV_\infty \dd x \dd t &\leq \|\nabla \bV_\infty\|_{L^\infty(\R^3)} \int_0^\tau \int_\Omega \rho_\eps |\bu_\eps - \bV_\infty|^2 \dd x \dd t \\
    &\lesssim \int_0^\tau \int_\Omega E_\eps(\rho_\eps, \bu_\eps | \rho_\infty, \bV_\infty) \dd x \dd t,
\end{align*}
and for any $\delta>0$,
\begin{align*}
    \int_0^\tau \int_\Omega \bS(\nabla \bV_\infty) : \nabla (\bu_\eps - \bV_\infty) \dd x \dd t &= \int_0^\tau \int_{B_1} \bS(\nabla \bV_\infty) : \nabla (\bu_\eps - \bV_\infty) \dd x \dd t \\
    &\leq C_\delta \|\nabla \bV_\infty\|_{L^2(B_1)}^2 + \delta \|\nabla(\bu_\eps - \bV_\infty)\|_{L^2((0,\tau) \times \Omega)}^2.
\end{align*}
For the first integral on the right-hand side of~\eqref{REI2} we abbreviate $\vc b = \bomega \times \bV_\infty + ((\bV_\infty - \bomega \times x)\cdot \nabla)\bV_\infty) - \vc f$ to see
\begin{align*}
    \int_0^\tau \int_\Omega \rho_\eps (\bV_\infty - \bu_\eps) \cdot \vc b \dd x \dd t = \int_0^\tau \int_\Omega [\rho_\eps]_{\ess, \eps} (\bV_\infty - \bu_\eps) \cdot \vc b \dd x \dd t + \int_0^\tau \int_\Omega [\rho_\eps]_{\res, \eps} (\bV_\infty - \bu_\eps) \cdot \vc b \dd x \dd t.
\end{align*}
Similarly to the estimate~\eqref{eq:coerciveP}, 
we see that the relative energy is coercive in the sense that
\begin{align}\label{eq:coercivity.eps}
    \frac{1}{\eps^2} \Big( [\rho_\eps]_{\reseps}^\gamma + [1]_{\reseps} + [\rho_\eps - \rho_\infty]_{\esseps}^2 \Big)
    + [\bu_\eps - \bV_\infty]_{\esseps}^2 \leq C E_\eps(\rho_\eps, \bu_\eps | \rho_\infty, \bV_\infty)
\end{align}
with a constant $C>0$ only depending on $\rho_\infty$, and thus independent of $\eps$. Note that $[\rho_\eps]_{\esseps} \leq 2\rho_\infty$ and $\vc b \in L^\infty(0,T; [L^1 \cap L^\infty] (\Omega))$ since $\vc f \in L^\infty(0,T; [L^1 \cap L^\infty] (\Omega))$. Hence, we find
\begin{align*}
    \int_0^\tau \int_\Omega [\rho_\eps]_{\esseps} (\bV_\infty - \bu_\eps) \cdot \vc b \dd x \dd t 
    &\lesssim \|\bu_\eps - \bV_\infty\|_{L^2(0,\tau; L^6(\Omega))} \|\vc b\|_{L^2(0,\tau; L^\frac65(\Omega))} 
    \\
    &\lesssim 1 + \delta \|\nabla(\bu_\eps - \bV_\infty)\|_{L^2((0,\tau) \times \Omega)}^2
\end{align*}
for any $\delta>0$.
For the residual part, we use H\"older's and Young's inequalities multiple times to conclude
\begin{align*}
    \int_\Omega [\rho_\eps]_{\reseps} (\bV_\infty - \bu_\eps) \cdot \vc b \dd x &\leq \|[\rho_\eps]_{\reseps}^{1/2} \|_{L^2(\Omega)} \|[\rho_\eps]_{\reseps}^{1/2} (\bu_\eps - \bV_\infty) \|_{L^2(\Omega)} \|\vc b\|_{L^\infty(\Omega)} \\
    &\lesssim \|[\rho_\eps]_{\reseps}\|_{L^1(\Omega)} + \|\rho_\eps |\bu_\eps - \bV_\infty|^2\|_{L^1(\Omega)} \\
    &\lesssim \|[\rho_\eps]_{\reseps}^\gamma\|_{L^1(\Omega)} + \|[1]_{\reseps}\|_{L^1(\Omega)} + \|\rho_\eps |\bu_\eps - \bV_\infty|^2\|_{L^1(\Omega)} \\
    &\lesssim \int_\Omega E_\eps(\rho_\eps, \bu_\eps | \rho_\infty, \bV_\infty) \dd x
\end{align*}
due to~\eqref{eq:coercivity.eps}
and $\eps<1$.
Choosing $\delta>0$ small enough and using the Korn-type inequality from Lemma~\ref{lem:Korn}, we are left with
\begin{align*}
    \left[ \int_\Omega E_\eps(\rho_\eps, \bu_\eps | \rho_\infty, \bV_\infty) \dd x \right]_{t=0}^{t=\tau} + \int_0^\tau \int_\Omega |\nabla (\bu_\eps - \bV_\infty)|^2 \dd x \dd t \lesssim 1 + \int_0^\tau \int_\Omega E_\eps(\rho_\eps, \bu_\eps | \rho_\infty, \bV_\infty) \dd x \dd t.
\end{align*}
By Gr\"onwall's inequality, this yields
\begin{align*}
    \sup_{t \in [0,T]} \int_\Omega E_\eps(\rho_\eps, \bu_\eps | \rho_\infty, \bV_\infty) \dd x + \int_0^T \int_\Omega |\nabla(\bu_\eps - \bV_\infty)|^2 \dd x \dd t \lesssim 1.
\end{align*}
The coercivity of the relative energy from~\eqref{eq:coercivity.eps} 
now enforces
\begin{align*}
    \|[\rho_\eps - \rho_\infty]_{\esseps}\|_{L^\infty(0,T; L^2(\Omega))}^2 + \|[\rho_\eps]_{\reseps}\|_{L^\infty(0,T; L^\gamma(\Omega))}^\gamma + \|[1]_{\reseps}\|_{L^\infty(0,T; L^1(\Omega))} &\lesssim \eps^2, \\
    \|[\bu_\eps - \bV_\infty]_{\esseps}\|_{L^\infty(0,T;L^2(\Omega))}^2 + \|\nabla(\bu_\eps - \bV_\infty)\|_{L^2(0,T; L^2(\Omega))}^2 &\lesssim 1.
\end{align*}
Returning to $\bU_\infty$, we use \eqref{estV} to see that
\begin{align*}
    \|\nabla(\bu_\eps - \bU_\infty)\|_{L^2(0,T; L^2(\Omega))} \leq \|\nabla(\bu_\eps - \bV_\infty)\|_{L^2(0,T; L^2(\Omega))} + \|\nabla(\bV_\infty - \bU_\infty)\|_{L^2(0,T; L^2(\Omega))} &\lesssim 1, \\
    \|[\bu_\eps - \bU_\infty]_{\esseps}\|_{L^\infty(0,T;L^2(\Omega))} \leq \|[\bu_\eps - \bV_\infty]_{\esseps}\|_{L^\infty(0,T;L^2(\Omega))} + \|[\bV_\infty - \bU_\infty]_{\esseps}\|_{L^\infty(0,T;L^2(\Omega))} &\lesssim 1.
\end{align*}
In conclusion, we obtain the asserted bounds.
\end{proof}

\providecommand{\weak}{\rightharpoonup}
From the uniform estimates obtained in Lemma~\ref{lem:bounds}, we can extract convergent subsequences.

\begin{Lemma}\label{lem:conv}
There exists a 
(not relabeled) subsequence of $(\rho_\eps,\bu_\eps)$ 
such that
\[
\begin{aligned}
    {[\rho_\eps - \rho_\infty]}_{\esseps} &\to 0 &&\text{ strongly in } L^\infty(0,T; L^2(\Omega)),\\
    [\rho_\eps]_{\reseps} &\to 0 &&\text{ strongly in } L^\infty(0,T; L^\gamma(\Omega)), \\
    [1]_{\reseps} &\to 0 &&\text{ strongly in } L^\infty(0,T; L^p(\Omega)),\ p\in[1,\infty), \\
    [\bu_\eps - \bU_\infty]_{\esseps} &\weak^\ast \bu - \bU_\infty &&\text{ weakly-$\ast$ in } L^\infty(0,T; L^2(\Omega)), \\
    \nabla(\bu_\eps - \bU_\infty) &\weak \nabla(\bu - \bU_\infty)&& \text{ weakly in } L^2(0,T; L^2(\Omega)),
\end{aligned}
\]
for some $\bu$ with 
$\bu-\bU_\infty\in L^2(0,T;W^{1,2}(\Omega))\cap L^\infty(0,T;L^2(\Omega))$
and $\div\bu=0$.
\end{Lemma}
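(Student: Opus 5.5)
The plan is to read off each convergence from the uniform bounds of Lemma~\ref{lem:bounds} and then identify the limit and show $\div\bu=0$ by passing to the limit in the continuity equation.

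\textbf{Density parts.} The first three convergences are immediate from Lemma~\ref{lem:bounds}. The first two bounds are $O(\eps^2)$, so they give strong convergence to zero in $L^\infty(0,T;L^2)$ and in $L^\infty(0,T;L^\gamma)$. For $[1]_\reseps$, which takes only the values $0$ and $1$, we have $\|[1]_\reseps\|_{L^p}^p=\|[1]_\reseps\|_{L^1}\lesssim\eps^2$ for every $p\in[1,\infty)$.

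\textbf{Velocity parts.} The remaining bounds of Lemma~\ref{lem:bounds} give a subsequence with $[\bu_\eps-\bU_\infty]_\esseps\weak^\ast \bw_1$ in $L^\infty(0,T;L^2)$ and $\nabla(\bu_\eps-\bU_\infty)\weak \bw_2$ in $L^2(0,T;L^2)$. To identify the limits I would use the function $\bu_\eps-\bU_\infty$ itself. Extended by zero into $\ms$ (its trace is $0$ on $\del\Omega$), it is bounded in $L^2(0,T;L^6(\Omega))$ by the Sobolev inequality. Hence, on a further subsequence, $\bu_\eps-\bU_\infty\weak \bw$ in $L^2(0,T;L^6)$ with $\nabla\bw=\bw_2$ in the distributional sense. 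Next compare $\bw$ with $\bw_1$. Since $[1]_\reseps\to0$ in $L^\infty(0,T;L^3)$, the residual part satisfies $\|[\bu_\eps-\bU_\infty]_\reseps\|_{L^2(0,T;L^2)}\le \|[1]_\reseps\|_{L^\infty L^3}\|\bu_\eps-\bU_\infty\|_{L^2L^6}\to0$. Therefore $\bw_1=\bw$ a.e., and setting $\bu:=\bU_\infty+\bw$ yields the stated regularity $\bu-\bU_\infty\in L^2(0,T;W^{1,2})\cap L^\infty(0,T;L^2)$.

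\textbf{Divergence-free limit.} Write the weak continuity equation~\eqref{eq:cont.weak} with $\rho_\eps=\rho_\infty+(\rho_\eps-\rho_\infty)$ and test it with $\phi\in C_c^1((0,T)\times\Omega)$. Both $\int\!\!\int(\rho_\eps-\rho_\infty)\del_t\phi$ and $\int\!\!\int(\rho_\eps-\rho_\infty)(\bu_\eps-\bomega\times x)\cdot\nabla\phi$ tend to zero. For the second term, split $\rho_\eps-\rho_\infty$ into its essential and residual parts. Multiplied by the locally bounded function $\bu_\eps-\bomega\times x$, which is bounded in $L^2(0,T;L^6_{\mathrm{loc}})$, these parts go to zero in $L^\infty L^2$ and in $L^\infty L^\gamma$ (plus the $[1]_\reseps$ contribution), respectively. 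For the residual part one needs $\frac1\gamma+\frac16\le1$, which holds since $\gamma>\frac65$. Also $\int\!\!\int\rho_\infty\del_t\phi=0$ and $\int\!\!\int\rho_\infty(\bomega\times x)\cdot\nabla\phi=0$, because $\div(\bomega\times x)=0$. In the limit this leaves $\int\!\!\int \bu\cdot\nabla\phi=0$, i.e.\ $\div\bu=0$.

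\textbf{Main obstacle.} The expected difficulty is the identification step: only the essential part of the velocity is bounded in $L^2$, and the residual set has vanishing measure but possibly unbounded $\bu_\eps$. The $L^6$ bound from Sobolev combined with the $L^3$ smallness of $[1]_\reseps$ is what handles it. Care with locally-integrable products near infinity is avoided by using compactly supported test functions.
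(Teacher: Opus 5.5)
Your proposal is correct and follows the same route as the paper. The density convergences are read off from Lemma~\ref{lem:bounds}, using $\|[1]_{\reseps}\|_{L^p}^p=\|[1]_{\reseps}\|_{L^1}$. The condition $\div\bu=0$ comes from passing to the limit in the weak continuity equation with an essential/residual splitting of $\rho_\eps-\rho_\infty$, where $\gamma\geq\frac65$ handles the residual term.

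There are two minor differences. First, you explicitly identify the weak-$\ast$ limit of $[\bu_\eps-\bU_\infty]_{\esseps}$ with the weak limit of $\bu_\eps-\bU_\infty$ in $L^2(0,T;L^6(\Omega))$, via Sobolev and $[1]_{\reseps}\to0$ in $L^\infty(0,T;L^3(\Omega))$; the paper leaves this step implicit. Second, you pass to the limit in $\rho_\infty\int\!\!\int\bu_\eps\cdot\nabla\phi$ directly, whereas the paper integrates by parts onto $\div(\bu_\eps-\bU_\infty)$.
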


\begin{proof}
If we take into account that
\[
\|[1]_{\reseps}\|_{L^\infty(0,T; L^p(\Omega))}^p = \|[1]_{\reseps}^p\|_{L^\infty(0,T; L^1(\Omega))} = \|[1]_{\reseps}\|_{L^\infty(0,T; L^1(\Omega))}
\]
for any $1 \leq p < \infty$,
the existence of a convergent subsequence in the asserted topologies follows directly from Lemma~\ref{lem:bounds}.
It only remains to show that $\bu$ is divergence free. 
To this end, we take the limit in the weak form of the continuity equation.
For any $\phi \in C_c^1((0,T) \times \Omega)$ we have 
\begin{align*}
    \int_0^T \int_\Omega (\rho_\eps - \rho_\infty) \del_t \phi + \rho_\eps(\bu_\eps - \bomega \times x) \cdot \nabla \phi \dd x \dd t = 0.
\end{align*}
For the first part, we see that
\begin{align*}
    \int_0^T \int_\Omega (\rho_\eps - \rho_\infty) \del_t \phi \dd x \dd t \lesssim \|[\rho_\eps - \rho_\infty]_{\esseps}\|_{L^\infty(0,T; L^2(\Omega))} + \|[\rho_\eps]_{\reseps}\|_{L^\infty(0,T; L^\gamma(\Omega))} + \|[1]_{\reseps}\|_{L^\infty(0,T; L^1(\Omega))}
\end{align*}
and the right-hand side vanishes as $\eps \to 0$ by the convergences obtained before. For the second term, we rewrite
\begin{align*}
    \int_0^T \int_\Omega \rho_\eps(\bu_\eps - \bomega \times x) \cdot \nabla \phi \dd x \dd t = \int_0^T \int_\Omega \rho_\eps(\bu_\eps - \bU_\infty) \cdot \nabla \phi \dd x \dd t + \int_0^T \int_\Omega \rho_\eps(\bU_\infty - \bomega \times x) \cdot \nabla \phi \dd x \dd t.
\end{align*}
Using the convergences of $[\rho_\eps - \rho_\infty]_{\esseps}$, $[\rho_\eps]_{\reseps}$, and $[1]_{\reseps}$, the last integral converges to
\begin{align*}
    \lim_{\eps \to 0} \int_0^T \int_\Omega \rho_\eps(\bU_\infty - \bomega \times x) \cdot \nabla \phi \dd x \dd t = \int_0^T \int_\Omega \rho_\infty (\bU_\infty - \bomega \times x) \cdot \nabla \phi \dd x \dd t.
\end{align*}
For the first integral, we have
\begin{align*}
    &\int_0^T \int_\Omega \rho_\eps(\bu_\eps - \bU_\infty) \cdot \nabla \phi \dd x \dd t \\
    &= \int_0^T \int_\Omega (\rho_\eps - \rho_\infty) (\bu_\eps - \bU_\infty) \cdot \nabla \phi \dd x \dd t + \rho_\infty \int_0^T \int_\Omega (\bu_\eps - \bU_\infty) \cdot \nabla \phi \dd x \dd t \\
    &= \int_0^T \int_\Omega (\rho_\eps - \rho_\infty) (\bu_\eps - \bU_\infty) \cdot \nabla \phi \dd x \dd t - \rho_\infty \int_0^T \int_\Omega \div(\bu_\eps - \bU_\infty) \phi \dd x \dd t.
\end{align*}
We obtain
\begin{align*}
    \int_0^T\int_\Omega [\rho_\eps - \rho_\infty]_{\esseps} (\bu_\eps - \bU_\infty) \cdot \nabla \phi \dd x\dd t &\lesssim \int_0^T\|[\rho_\eps - \rho_\infty]_{\esseps}\|_{L^2(\Omega)} \|[\bu_\eps - \bU_\infty]_{\esseps}\|_{L^2(\Omega)} \dd t \\
    &\lesssim \|[\rho_\eps - \rho_\infty]_{\esseps}\|_{L^\infty(0,T;L^2(\Omega))} \to 0, \\
    \int_0^T \int_\Omega [\rho_\eps - \rho_\infty]_{\reseps} (\bu_\eps - \bU_\infty) \cdot \nabla \phi \dd x \dd t &\lesssim \int_0^T \|\bu_\eps - \bU_\infty\|_{L^6(\Omega)} \Big( \|[\rho_\eps]_{\reseps}\|_{L^\frac65(\Omega)} + \|[1]_{\reseps}\|_{L^\frac65(\Omega)} \Big)\dd t \\
    &\lesssim \|[\rho_\eps]_{\reseps}^\gamma\|_{L^\infty(0,T;L^1(\Omega))} + \|[1]_{\reseps}\|_{L^\infty(0,T;L^\frac65(\Omega))} \to 0, \\
    \int_0^T \int_\Omega \div(\bu_\eps - \bU_\infty) \phi \dd x \dd t &\to \int_0^T \int_\Omega \div(\bu - \bU_\infty) \phi \dd x \dd t.
\end{align*}
Gathering the terms above, we infer
\begin{align*}
    0 &= \lim_{\eps\to0} \int_0^T \int_\Omega (\rho_\eps - \rho_\infty) \del_t \phi + \rho_\eps(\bu_\eps - \bomega \times x) \cdot \nabla \phi \dd x \dd t \\
    &= -\int_0^T \int_\Omega \rho_\infty \div(\bu - \bU_\infty) \phi \dd x \dd t + \int_0^T \int_\Omega \rho_\infty(\bU_\infty - \bomega \times x) \cdot \nabla \phi \dd x \dd t \\
    &= -\int_0^T \int_\Omega \rho_\infty \phi \div \bu \dd x \dd t
\end{align*}
for any $\phi \in C_c^1((0,T) \times \Omega)$,
leading to $\div \bu = 0$.
\end{proof}

To conclude the proof of the theorem,
we invoke the scaled REI \eqref{REI-LM} again, now with the strong solution $\bU$ of \eqref{NSE-LM-Limit}.

\begin{proof}[Proof of Theorem~\ref{thm:singLim}]
We use $(\sigma,\bV)=(\rho_\infty, \bU)$ in \eqref{REI-LM}
and utilize the identities $\div \bU = 0$, $\del_t H'(\rho_\infty) = 0$ and $\nabla H'(\rho_\infty) = 0$, together with the solution property of $(\bU, \Pi)$,
to find that
\begin{equation}\label{REI-LM-1}
\begin{aligned}
    &\left[ \int_\Omega E_\eps(\rho_\eps, \bu_\eps | \rho_\infty, \bU) \dd x \right]_{t=0}^{t=\tau} + \int_0^\tau \int_\Omega \bS(\nabla (\bu_\eps - \bU)) : \nabla (\bu_\eps - \bU) \dd x \dd t \\
    &\leq \int_0^\tau \int_\Omega \Big( \frac{\rho_\eps}{\rho_\infty} - 1 \Big) (\bU - \bu_\eps) \cdot ( \div \bS(\nabla \bU) - \nabla \Pi) \dd x \dd t \\
    &\qquad - \int_0^\tau \int_\Omega (\bU - \bu_\eps) \cdot \nabla \Pi \dd x \dd t \\
    &\qquad + \int_0^\tau \int_\Omega \rho_\eps (\bU - \bu_\eps) \cdot ((\bu_\eps - \bU) \cdot \nabla) \bU \dd x \dd t.
\end{aligned}
\end{equation}
Here we used integration by parts on the term including $\bS(\nabla\bU)$.
We decompose the second term on the right-hand side as
\begin{align*}
    \int_0^\tau \int_\Omega (\bU - \bu_\eps) \cdot \nabla \Pi \dd x \dd t = \int_0^\tau \int_\Omega [\bU - \bu_\eps]_{\esseps} \cdot \nabla \Pi \dd x \dd t + \int_0^\tau \int_\Omega [\bU - \bu_\eps]_{\reseps} \cdot \nabla \Pi \dd x \dd t =: I_1 + I_2.
\end{align*}

The integral $I_2$ vanishes as $\eps \to 0$ due to
\begin{align*}
I_2 &\leq \|\bU - \bu_\eps\|_{L^2(0,T; L^6(\Omega))} \|\nabla \Pi\|_{L^2(0,T; L^2(\Omega))} \|[1]_{\reseps}\|_{L^\infty(0,T; L^3(\Omega))} \\
&\leq \|\nabla(\bU - \bu_\eps)\|_{L^2((0,T) \times \Omega)} \|\nabla \Pi\|_{L^2(0,T; L^2(\Omega))} \|[1]_{\reseps}\|_{L^\infty(0,T; L^3(\Omega))} \to 0
\end{align*}
as $\varepsilon\to 0$
since $\nabla(\bU - \bu_\eps)$ is uniformly bounded in $L^2((0,T) \times \Omega)$ and $[1]_{\reseps} \to 0$ in $L^\infty(0,T; L^p(\Omega))$ for any $1 \leq p < \infty$
by Lemma~\ref{lem:conv}. \
For $I_1$, we use that $[\bU - \bu_\eps]_{\esseps} \weak^\ast \bU - \bu$ in $L^\infty(0,T; L^2(\Omega))$ to deduce
\begin{align*}
I_1=\int_0^\tau \int_\Omega [\bU - \bu_\eps]_{\esseps} \cdot \nabla \Pi \dd x \dd t \to \int_0^\tau \int_\Omega (\bU - \bu) \cdot \nabla \Pi \dd x \dd t = 0,
\end{align*}
which follows from $(\bU - \bu)|_{\del \Omega} = 0$ as well as $\div \bU = \div \bu = 0$.
The last term in \eqref{REI-LM-1} is clearly controlled by the relative energy $E_\eps(\rho_\eps, \bu_\eps | \rho_\infty, \bU)$
since $\nabla\bU\in L^1(0,T;L^\infty(\Omega))$. 
Lastly, splitting the first term on the right-hand side of \eqref{REI-LM-1} into its essential and residual part, 
we can derive estimates analogous to~\eqref{eq:reldiff.res} and~\eqref{eq:reldiff.ess}. 
Due to the integrability assumptions from~\eqref{eq:singLim.assstrong},
we finally find
\begin{align*}
    &\left[ \int_\Omega E_\eps(\rho_\eps, \bu_\eps | \rho_\infty, \bU) \dd x \right]_{t=0}^{t=\tau} + \int_0^\tau \int_\Omega \bS(\nabla (\bu_\eps - \bU)) : \nabla (\bu_\eps - \bU) \dd x \dd t \\
    &\lesssim \int_0^\tau \int_\Omega E_\eps(\rho_\eps, \bu_\eps | \rho_\infty, \bU) \dd x \dd t + o_{\eps},
\end{align*}
where $o_\eps\geq 0$ is a scalar with $o_\eps\to 0$ as $\eps\to0$.
Gr\"onwall's inequality then yields
\begin{align*}
    &\sup_{t \in [0, T]} \int_\Omega E_\eps(\rho_\eps, \bu_\eps | \rho_\infty, \bU) \dd x + \int_0^T \int_\Omega \bS(\nabla (\bu_\eps - \bU)) : \nabla (\bu_\eps - \bU) \dd x \dd t \\
    &\lesssim \int_\Omega E_\eps(\rho_{\eps, 0}, \bu_{\eps, 0} | \rho_\infty, \bU_0) \dd x + o_{\eps}.
\end{align*}
Due to assumption \eqref{ass:initDat},
we now conclude the limit~\eqref{eq:limit.lm} 
as $\varepsilon\to 0$
along the subsequence chosen in Lemma~\ref{lem:conv}.
However, since this limit is independent of the chosen subsequence,
the sequence itself converges, which finishes the proof of Theorem~\ref{thm:singLim}.
\end{proof}

\subsection{Quantitative estimates, proof of Theorem~\ref{thm:singLimRate}}
Lastly, we derive the convergence rates as given in Theorem~\ref{thm:singLimRate}. To this end, we recall the uniform bounds obtained in Lemma~\ref{lem:bounds} and the inequality \eqref{REI-LM-1}, which we rewrite in the form
\begin{equation}\label{REI-LM-R1}
\begin{aligned}
    &\left[ \int_\Omega E_\eps(\rho_\eps, \bu_\eps | \rho_\infty, \bU) \dd x \right]_{t=0}^{t=\tau} + \int_0^\tau \int_\Omega \bS(\nabla (\bu_\eps - \bU)) : \nabla (\bu_\eps - \bU) \dd x \dd t \\
    &\leq \int_0^\tau \int_\Omega \Big( \frac{\rho_\eps}{\rho_\infty} - 1 \Big) (\bU - \bu_\eps) \cdot \div \bS(\nabla \bU) \dd x \dd t \\
    &\qquad - \int_0^\tau \int_\Omega \frac{\rho_\eps}{\rho_\infty} (\bU - \bu_\eps) \cdot \nabla \Pi \dd x \dd t \\
    &\qquad + \int_0^\tau \int_\Omega \rho_\eps (\bU - \bu_\eps) \cdot ((\bu_\eps - \bU) \cdot \nabla) \bU \dd x \dd t.
\end{aligned}
\end{equation}

\begin{proof}[Proof of Theorem~\ref{thm:singLimRate}]
As before, the last term on the right-hand side of~\eqref{REI-LM-R1} is estimated by the relative energy itself. The first term we split into its essential and residual part to see
\begin{align*}
    &\int_0^\tau \int_\Omega \Big[ \frac{\rho_\eps}{\rho_\infty} - 1 \Big]_{\ess, \eps} (\bU - \bu_\eps) \cdot \div \bS(\nabla \bU) \dd x \dd t \\
    &\quad \leq C \|[\rho_\eps - \rho_\infty]_{\ess, \eps} \|_{L^\infty(0,T; L^2(\Omega))} \|\bU - \bu_\eps \|_{L^2(0,T; L^6(\Omega))} \|\div \bS(\nabla \bU)\|_{L^2(0,T; L^3(\Omega))} \leq C \eps, \\
    &\int_0^\tau \int_\Omega \Big[ \frac{\rho_\eps}{\rho_\infty} - 1 \Big]_{\res, \eps} (\bU - \bu_\eps) \cdot \div \bS(\nabla \bU) \dd x \dd t \\
    &\quad \leq C \|[\rho_\eps - \rho_\infty]_{\res, \eps} \|_{L^\infty(0,T; L^\gamma(\Omega))} \|\bU - \bu_\eps \|_{L^2(0,T; L^6(\Omega))} \|\div \bS(\nabla \bU)\|_{L^2(0,T; L^\frac{6\gamma}{5\gamma-6}(\Omega))} \leq C \eps^\frac2\gamma,
\end{align*}
where we used Sobolev embeddings and the uniform bounds obtained in Lemma~\ref{lem:bounds}. 
For the remaining pressure part, we make use 
of the function $\bV_\infty$ defined in~\eqref{eq:V.def} and infer
\begin{align*}
    &\int_0^\tau \int_\Omega \frac{\rho_\eps}{\rho_\infty} (\bU - \bu_\eps) \cdot \nabla \Pi \dd x \dd t 
    \\
    &\quad
    = \int_0^\tau \int_\Omega \Big( \frac{\rho_\eps}{\rho_\infty} - 1\Big) (\bU-\bV_\infty) \cdot \nabla \Pi 
    - \frac{\rho_\eps}{\rho_\infty} (\bu_\eps-\bV_\infty) \cdot \nabla \Pi 
    + (\bU-\bV_\infty) \cdot \nabla \Pi   
    \dd x \dd t\\
    &\quad
    = \int_0^\tau \int_\Omega \Big( \frac{\rho_\eps}{\rho_\infty} - 1\Big) (\bU-\bV_\infty) \cdot \nabla \Pi \dd x \dd t 
    - \frac{1}{\rho_\infty} \int_0^\tau \int_\Omega\rho_\eps (\bu_\eps-\bV_\infty) \cdot \nabla \Pi \dd x \dd t,
\end{align*}
where we used that the integral over $(\bU-\bV_\infty) \cdot \nabla \Pi$ vanishes by the 
divergence theorem due to
$\div \bU=\div\bV_\infty=0$ in $\Omega$ and $\bU=\bV_\infty=\bomega\times x$ on $\partial\Omega$. 
For the second integral, we deduce from the continuity equation \eqref{eq:cont.weak} that 
\begin{align*}
    - \int_0^\tau \int_\Omega\rho_\eps (\bu_\eps-\bV_\infty) \cdot \nabla \Pi \dd x \dd t &= \int_0^\tau \int_\Omega (\rho_\eps - \rho_\infty) \del_t \Pi + \rho_\eps (\bV_\infty-\bomega \times x) \cdot \nabla \Pi \dd x \dd t  \\
    &\qquad 
    -\left[ \int_\Omega (\rho_{\eps} - \rho_\infty) \Pi\dd x \right]_{t=0}^{t=\tau},
\end{align*}
which follows by an approximation that requires to introduce the 
additional terms related to $\rho_\infty$, which can be carried out similarly to the approximation argument in the proof of Theorem~\ref{thm:rei}. Note that $\Pi$ is an admissible test function due to the regularity assumed in~\eqref{eq:sigLimRate.pressurereg}.
Moreover, the divergence theorem yields
\[
\int_\Omega(\bV_\infty-\bomega \times x) \cdot \nabla \Pi \dd x = 0 .
\]
Hence, we may write
\begin{align*}
    \int_0^\tau \int_\Omega \frac{\rho_\eps}{\rho_\infty} (\bU - \bu_\eps) \cdot \nabla \Pi \dd x \dd t 
    &= \int_0^\tau \int_\Omega \Big( \frac{\rho_\eps}{\rho_\infty} - 1\Big) (\bU - \bV_\infty) \cdot \nabla \Pi \dd x \dd t \\
    &\qquad + \int_0^\tau \int_\Omega \Big( \frac{\rho_\eps}{\rho_\infty} - 1 \Big) (\bV_\infty - \omega \times x) \cdot \nabla \Pi \dd x \dd t \\
    &\qquad + \int_0^\tau \int_\Omega \Big( \frac{\rho_\eps}{\rho_\infty} - 1 \Big) \del_t \Pi \dd x \dd t -\left[\int_\Omega  \Big(\frac{\rho_{\eps}}{\rho_\infty} - 1 \Big) \Pi \dd x\right]_{t=0}^{t=\tau}.
\end{align*}
Splitting as before in essential and residual part, and using that
\begin{align*}
    \left[\int_\Omega  \Big(\frac{\rho_{\eps}}{\rho_\infty} - 1 \Big) \Pi \dd x\right]_{t=0}^{t=\tau} \leq C \|\rho_\eps - \rho_\infty\|_{L^\infty(0,T; [L^2+L^\gamma] (\Omega))} \|\Pi\|_{L^\infty(0,T; [L^2 \cap L^\frac{\gamma}{\gamma-1}] (\Omega))} \leq C (\eps + \eps^\frac2\gamma)
\end{align*}
due to~\eqref{eq:sigLimRate.pressurereg},
we thus infer
\begin{align*}
    \int_0^\tau \int_\Omega \frac{\rho_\eps}{\rho_\infty} (\bU - \bu_\eps) \cdot \nabla \Pi \dd x \dd t \leq C(\eps + \eps^\frac2\gamma).
\end{align*}
Back to \eqref{REI-LM-R1}, we find
\begin{align*}
    &\left[ \int_\Omega E_\eps(\rho_\eps, \bu_\eps | \rho_\infty, \bU) \dd x \right]_{t=0}^{t=\tau} + \int_0^\tau \int_\Omega \bS(\nabla (\bu_\eps - \bU)) : \nabla (\bu_\eps - \bU) \dd x \dd t \\
    &\leq C(\eps + \eps^\frac2\gamma) + C \int_0^\tau \int_\Omega E_\eps(\rho_\eps, \bu_\eps | \rho_\infty, \bU) \dd x \dd t,
\end{align*}
so that Gr\"onwall's inequality finally yields
\begin{align*}
    &\sup_{\tau \in [0,T]} \int_\Omega E_\eps(\rho_\eps, \bu_\eps | \rho_\infty, \bU)(\tau) \dd x + \int_0^T\int_\Omega \bS(\nabla (\bu_\eps - \bU)) : \nabla (\bu_\eps - \bU) \dd x \dd t \\
    &\leq C \Big( \int_\Omega E_\eps(\rho_{\eps, 0}, \bu_{\eps, 0} | \rho_\infty, \bU_0) \dd x + \eps + \eps^\frac2\gamma \Big).
\end{align*}
We finish the proof of Theorem~\ref{thm:singLimRate} by applying Korn's inequality from Lemma~\ref{lem:Korn}.
\end{proof}

\subsection*{Acknowledgments}
T.~Eiter’s research has been funded by Deutsche Forschungsgemeinschaft (DFG) through grant CRC 1114 ``Scaling Cascades in Complex Systems'', Project Number 235221301, Project YIP. F.~Oschmann has been supported by the Czech Science Foundation (GA\v CR) project 22-01591S, and the Czech Academy of Sciences project L100192351.
\v S. Ne\v casov\' a was supported by Premium Academia of \v S.N. The Institute of Mathematics, CAS is supported by RVO:67985840.

\printbibliography

\end{document}